\newtheorem{theorem}{Theorem}[section]
\theoremstyle{definition}
\newtheorem{definition}[theorem]{Definition}
\newtheorem{remark}[theorem]{Remark}
\numberwithin{equation}{section}
\begin{document}

\baselineskip=15.5pt

\title[Non-K\"ahler Calabi--Yau manifolds]{Non-K\"ahler Calabi--Yau manifolds and
holomorphic geometric structures}

\author[I. Biswas]{Indranil Biswas}

\address{Department of Mathematics, Shiv Nadar University, NH91, Tehsil Dadri,
Greater Noida, Uttar Pradesh 201314, India}

\email{indranil.biswas@snu.edu.in, indranil29@gmail.com}

\author[S. Dumitrescu]{Sorin Dumitrescu}

\address{Universit\'e C\^ote d'Azur, CNRS, LJAD, France}

\email{dumitres@unice.fr}

\subjclass[2020]{53B35, 53C55, 53A55}

\keywords{Holomorphic geometric structure, trivial canonical bundle, Calabi-Yau manifold, Vaisman manifold,
Bochner principle}

\date{}

\begin{abstract}
We study holomorphic geometric structures on non-K\"ahler compact complex manifolds with trivial canonical 
line bundle. For Vaisman Calabi-Yau manifolds we prove that all holomorphic geometric structures of affine 
type on them are locally homogeneous. Moreover, if the geometric structure is rigid, then the Vaisman 
manifold must be a Kodaira manifold. The proof uses a Beauville-Bogomolov type decomposition from \cite{Is} 
together with a weak form of Bochner principle for Vaisman Calabi-Yau manifolds that we prove here. Other 
results show that a compact complex manifold with self-dual holomorphic tangent bundle bearing a rigid 
holomorphic geometric structure of affine type have infinite fundamental group. We prove the same result for 
compact complex manifolds with trivial canonical line bundle having semistable holomorphic tangent bundle, 
with respect to some Gauduchon metric. We exhibit (non-K\"ahler) compact complex simply connected manifolds 
with trivial canonical line bundle that admit non-closed holomorphic one-forms.
\end{abstract}

\maketitle

\tableofcontents

\section{Introduction}

The geometry of compact K\"ahler manifolds with vanishing real first Chern class was intensively 
investigated in the last decades. A main differential geometric tool in the study of those so-called 
K\"ahler Calabi-Yau manifolds was introduced by the celebrated theorem of Yau \cite{Ya} which endows these 
manifolds with Ricci flat K\"ahler metric. The Riemannian geometry of the Ricci flat K\"ahler metric leads 
to the important Beauville-Bogomolov decomposition theorem for those manifolds which roughly speaking 
splits the manifold, up to a finite unramified cover, into a product of a flat compact complex torus and a 
simply connected K\"ahler manifold with trivial canonical line bundle \cite{Be,Bo}. Consequently, the 
canonical line bundle of a K\"ahler Calabi-Yau manifold has finite order (a finite power of it is 
holomorphically isomorphic to the trivial holomorphic line bundle). Another powerful tool which plays a 
role in the Beauville-Bogomolov decomposition theorem is the Bochner principle which asserts that any 
holomorphic tensor on a compact K\"ahler Calabi-Yau manifold is parallel with respect to a Ricci flat 
K\"ahler metric on it \cite{Be}.

Inspired by the results on geometric structures proved in \cite{Gr,DG}, and using the geometry of compact 
K\"ahler Calabi-Yau manifolds, the second-named author proved previously the following theorem:

\begin{theorem}[{\cite{Du}}]\label{thm sorin}
Let $M$ be a compact K\"ahler manifold $M$ with trivial first Chern class bearing a holomorphic geometric
structure $\phi$ of affine type. Then
\begin{enumerate}
\item $\phi$ is locally homogeneous;

\item if $\phi$ is rigid, then $M$ is covered by a compact complex torus. 
\end{enumerate}
\end{theorem}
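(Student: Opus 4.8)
The plan is to combine Yau's existence theorem with the Bochner principle to turn $\phi$ into a parallel object, and then read off both conclusions from the Beauville--Bogomolov decomposition via the holonomy principle. First I would invoke Yau's theorem \cite{Ya} to equip $M$ with a Ricci-flat K\"ahler metric $g$; its Levi-Civita connection $\nabla$ has restricted holonomy contained in $SU(n)$, with $n=\dim_{\mathbb{C}}M$. Since $\phi$ is of affine type it is a holomorphic tensor (a section of a bundle associated to the frame bundle through a $GL(n)$-representation), so the Bochner principle \cite{Be} applies directly and shows that $\phi$ is $\nabla$-parallel. The complex structure, the K\"ahler form, and the parallel holomorphic volume form furnished by the triviality of the canonical bundle are parallel as well. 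This first step is precisely where the K\"ahler hypothesis and the vanishing first Chern class are consumed.

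Next I would pass to the universal cover and apply the Beauville--Bogomolov decomposition \cite{Be,Bo}, writing, after a finite cover, $\widetilde M \cong \mathbb{C}^k \times \prod_i N_i$, where each $N_i$ is simply connected, irreducible, with holonomy exactly $SU(m_i)$ or $Sp(r_i)$ and $\dim_{\mathbb{C}} N_i \geq 2$. The tangent bundle splits $\nabla$-parallel-ly along this product, so by the holonomy principle the parallel tensor $\phi$ is a sum of tensor products of holonomy-invariant tensors on the factors. On the flat factor these are the constant tensors; on each $N_i$ the only $SU(m_i)$- or $Sp(r_i)$-invariant holomorphic tensors are generated by the holomorphic volume form $\Omega_i$ (respectively the holomorphic symplectic form $\sigma_i$), its dual, and contractions. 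This structural reduction is the technical heart of the argument: $\phi$ is assembled from these standard, Darboux-type building blocks.

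For statement (1) I would then produce local automorphisms one factor at a time: translations on $\mathbb{C}^k$ and, on each $N_i$, the holomorphic volume-preserving (respectively symplectic) local biholomorphisms, which act transitively by a holomorphic Darboux-type normalization. Each such map fixes $\Omega_i$, $\sigma_i$ and their duals exactly, hence fixes $\phi$, and the resulting product pseudogroup is transitive, so $\phi$ is locally homogeneous. For statement (2) I would bring in Gromov's rigidity: the local Killing fields of a rigid structure are determined by a finite jet and therefore form a finite-dimensional Lie algebra. However, the holomorphic volume-preserving (respectively symplectic) vector fields on any $N_i$ of dimension at least two form an infinite-dimensional algebra, and extending them by zero on the remaining factors yields infinitely many Killing fields of $\phi$. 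This contradiction forces the absence of all factors $N_i$, whence $\widetilde M = \mathbb{C}^k$ and $M$ is covered by a compact complex torus.

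I expect the main obstacle to be the structural reduction of the second step, namely pinning down exactly which holomorphic parallel tensors survive on the irreducible factors. This rests on the classical invariant theory of $SU(m)$ and $Sp(r)$, whose complexifications are $SL(m,\mathbb{C})$ and $Sp(r,\mathbb{C})$, so that the invariants are generated by the determinant and symplectic forms together with contractions; it also requires checking that the Bochner principle applies to the full tensor $\phi$ and not merely to the induced volume form. Once this reduction is secured, the transitivity in (1) and the infinite-dimensionality contradiction in (2) both follow with little extra work.
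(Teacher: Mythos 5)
Your argument has a genuine gap at its very first step, and it is exactly the point you defer as something that "requires checking": the claim that a holomorphic geometric structure of affine type is a holomorphic tensor, so that the Bochner principle of \cite{Be} applies to $\phi$ itself. In Definition \ref{def}, \emph{affine type} refers to the target $Z$ being an affine variety; it says nothing about the order $k$. For $k\,\geq\, 2$ the structure $\phi$ is an equivariant map on the higher-order frame bundle $R^k(M)$, whose structure group $D^k$ is not $\mathrm{GL}(n,\mathbb{C})$ and is not reductive; even after embedding $Z$ equivariantly into a linear $D^k$-module, $\phi$ becomes a holomorphic section of a bundle associated to $R^k(M)$, not a tensor, and there is no meaning to ``$\phi$ is parallel for the Levi-Civita connection of the Ricci-flat metric'' (whose Christoffel symbols are not even holomorphic). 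The decisive example is a holomorphic affine connection, which is of affine type and order two: this is precisely the case through which statement (2) generalizes \cite{IKO}, and your proof does not reach it. Consequently both halves of your argument --- the holonomy-principle reduction of $\phi$ to volume/symplectic building blocks on the Beauville--Bogomolov factors, and the transitivity and infinite-dimensionality claims built on it --- are justified only when $\phi$ has order one.

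The missing bridge, which is the actual heart of the proof in \cite{Du}, is the lemma quoted in this paper (in Section \ref{seva}) as \cite[Lemma 3.2]{Du}: if an affine-type structure of \emph{arbitrary} order is not locally homogeneous, then there exist integers $a,\, b \,\geq\, 0$, not both zero, and a nontrivial holomorphic section of $(TM)^{\otimes a}\otimes (T^*M)^{\otimes b}$ vanishing at some point of $M$. It is to this genuine tensor --- not to $\phi$ --- that the Bochner principle is applied: being parallel, it cannot vanish at a point without vanishing identically, and this contradiction proves (1). For statement (2) the route is also different from yours: rigidity, local homogeneity and simple connectivity make the local Killing fields global and transitive on the universal cover, whereas the simply connected compact Beauville--Bogomolov factors $N_i$ carry no nonzero holomorphic vector fields at all (Bochner again), so transitivity forces the universal cover to be $\mathbb{C}^k$ and the finite cover to be a compact complex torus. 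Your alternative mechanism for (2) --- the infinite-dimensional algebra of divergence-free, respectively symplectic, local vector fields on $N_i$ against Gromov finite-dimensionality of the Killing algebra --- is a correct and appealing idea in the order-one (tensor) case, but it rests on the structural reduction of $\phi$ into holonomy-invariant building blocks, which is unavailable for higher-order structures; without an analogue of \cite[Lemma 3.2]{Du} the proposal cannot be completed.
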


The proof of the first statement makes use of the Bochner principle, while the second statement uses the 
Beauville--Bogomolov decomposition theorem \cite{Be,Bo}. Statement 2 of Theorem \ref{thm sorin}
generalizes a result established in \cite{IKO} for holomorphic affine connections.

Concerning non-K\"ahler manifolds, the results of Tosatti in \cite{To} triggered attention on compact 
complex manifolds having vanishing first Chern class in the Bott-Chern cohomology (see Section \ref{section 
semi-stable}) with the hope that they might inherit some geometric features of K\"ahler Calabi-Yau 
manifolds. Following Tosatti, \cite{To}, we call those manifolds non-K\"ahler Calabi-Yau manifolds. In 
particular, compact complex manifolds with trivial canonical line bundle are non-K\"ahler Calabi-Yau in 
this sense. It was proved in \cite{To} that non-K\"ahler Calabi-Yau manifolds in the Fujiki class 
$\mathcal C$ (i.e., bimeromorphic with K\"ahler manifolds) have torsion canonical line bundle (meaning
some power of the canonical line bundle is holomorphically trivial). 
Recently, a decomposition theorem and a Bochner principle were proved in \cite{BCDG} for 
non-K\"ahler Calabi-Yau manifolds in the Fujiki class $\mathcal C$ which are either Moishezon (i.e.,
birational with projective manifolds) or of dimension not more than four.

It should be mentioned that the {\it generalized Calabi-Gray manifolds} constructed by Fei in \cite{Fe} 
provide examples of simply connected non-K\"ahler Calabi-Yau manifolds with trivial canonical line bundle 
that admit non-trivial holomorphic one-forms with the property that the form vanishes at some point of the 
manifold. Consequently, the Bochner principle does not apply to those generalized Calabi-Gray manifolds 
constructed in \cite{Fe} whose holomorphic tangent bundle is not polystable with respect to any Gauduchon 
metric on it (see Section \ref{section semi-stable}).

Examples of complex structures on connected sums of $n \,\geq\, 2$ copies of $S^3 \times S^3$ giving 
non-K\"ahler Calabi-Yau threefolds with trivial canonical line bundle were constructed by Lu and Tian and 
by Friedman in \cite{Fr,LT1,LT2}. Bozhkov proved that all those examples of simply connected non-K\"ahler 
Calabi-Yau threefolds admit a stable holomorphic tangent bundle with respect to any Gauduchon metric 
\cite{Boz}. Other examples of simply connected non-K\"ahler Calabi-Yau threefolds with trivial canonical 
line bundle and of algebraic dimension zero were constructed in \cite{FP}. Notice also that examples of 
compact complex manifolds with vanishing first Chern class and holomorphic canonical bundle which is not of finite order are given in \cite{Ro}.

Recently a decomposition theorem of Beauville-Bogomolov type was proved by Istrati, \cite{Is}, for 
non-K\"ahler Calabi-Yau manifolds which are Vaisman (see \cite{Va1,Va2} for Vaisman manifolds). More 
precisely, it is proved in \cite[Theorem D]{Is} that, up to finite unramified covers, Vaisman Calabi-Yau manifolds have the structure of a principal 
elliptic orbibundle over an orbifold basis which is the product of an abelian variety with a simply 
connected projective orbifold with trivial canonical line bundle. This implies, in particular, that the canonical line 
bundle is of finite order. Her proof uses the orbifold version of the Beauville-Bogomolov decomposition for 
projective {\it pure} orbifolds (i.e., the singularities are of quotient type) proved by Campana in \cite{Ca}.

In this article we prove a weak form of the Bochner principle for Vaisman Calabi-Yau manifolds (Theorem 
\ref{Bochner}) which says the following: {\it No nontrivial holomorphic tensor on a compact Vaisman 
Calabi-Yau manifold vanishes at some point}. Using this Theorem \ref{Bochner} and the decomposition theorem 
of Istrati in \cite{Is}, we deduce the following analogous of Theorem \ref{thm sorin} for compact Vaisman 
Calabi-Yau manifolds.

\begin{theorem} \label{Vaisman}
Let $M$ be a compact Vaisman Calabi-Yau manifold bearing a holomorphic geometric structure $\phi$
of affine type. Then the following two statements hold:
\begin{enumerate}
\item $\phi$ is locally homogeneous and invariant with respect to the canonical elliptic fibration.

\item If $\phi$ is rigid, then $M$ admits a finite unramified cover which
is a Kodaira manifold (i.e., an elliptic pure orbibundle over an abelian variety).
\end{enumerate}
\end{theorem}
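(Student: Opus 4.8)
The plan is to combine the fibered structure furnished by Istrati's decomposition theorem \cite{Is} with the Kähler result in Theorem \ref{thm sorin}, using the weak Bochner principle (Theorem \ref{Bochner}) to control the holomorphic tensors that arise along the canonical foliation and across the orbifold locus. By \cite{Is} the manifold $M$ is a principal elliptic orbibundle $\pi : M \longrightarrow B$, where $B = A \times Y$ is the product of an abelian variety $A$ and a simply connected compact complex orbifold $Y$ with trivial canonical bundle; in particular $B$ is a compact Kähler Calabi--Yau orbifold. The elliptic curve $E$ acts holomorphically on $M$, and this action is generated by the globally defined holomorphic vector field $V = \theta^{\sharp} - \sqrt{-1}\, J\theta^{\sharp}$ coming from the Lee field of the Vaisman structure, which is nowhere vanishing and whose real and imaginary parts are Killing for the Vaisman metric.

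First I would establish invariance of $\phi$ under the canonical elliptic fibration. Since $\phi$ is of affine type, it is a holomorphic section of a bundle $W \to M$ associated with the frame bundle through an algebraic action, and $\mathcal{L}_V\phi$ is again a holomorphic section of such a bundle. Because $V$ has Killing real and imaginary parts, the closure of the one-parameter group $\{f_t\}_{t \in \mathbb{C}}$ generated by $V$ is a compact torus inside the isometry group; consequently the holomorphic map $\mathbb{C} \to H^0(M, W)$, $t \mapsto f_t^{*}\phi$, into the finite dimensional space of holomorphic sections has relatively compact image, hence is bounded and therefore constant by Liouville's theorem. Thus $f_t^{*}\phi = \phi$ for all $t$, equivalently $\mathcal{L}_V\phi = 0$, which is precisely the asserted invariance with respect to the canonical elliptic fibration. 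The weak Bochner principle enters here to guarantee, in addition, that the tensors produced along the way never degenerate, so that $\phi$ stays a genuine geometric structure along every fiber.

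Next I would descend and invoke the Kähler case. The $E$-invariance of $\phi$ lets me encode it, via the splitting of $TM$ coming from the Vaisman structure, as a holomorphic geometric structure $\overline{\phi}$ of affine type on the base orbifold $B = A \times Y$. Applying the orbifold version of the first statement of Theorem \ref{thm sorin} to the compact Kähler Calabi--Yau orbifold $B$ shows that $\overline{\phi}$ is locally homogeneous; combining the local automorphisms of $\overline{\phi}$ with the fiberwise translations of $E$, which preserve $\phi$ by the previous step, produces a pseudogroup of local automorphisms of $\phi$ acting transitively on $M$, giving statement (1). The weak Bochner principle (Theorem \ref{Bochner}) is again decisive: Gromov's theory only yields local homogeneity on a dense open set, whose complement is the zero locus of a canonically associated nonzero holomorphic tensor, and since by Theorem \ref{Bochner} such a tensor cannot vanish anywhere, this complement is empty and local homogeneity holds on all of $M$.

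Finally, for statement (2), suppose $\phi$ is rigid. Rigidity passes to $\overline{\phi}$ on $B$, since local automorphisms of $\overline{\phi}$ correspond to $E$-invariant local automorphisms of $\phi$ and are hence determined by a finite jet. The second statement of Theorem \ref{thm sorin} then forces $B = A \times Y$ to be covered by a compact complex torus. Since $Y$ is (orbifold) simply connected, the universal cover of $B$ is $\mathbb{C}^{\dim A} \times Y$, and this can agree with the universal cover $\mathbb{C}^{\dim B}$ of a compact complex torus only when the compact factor $Y$ reduces to a point. Hence $M \to A$ is an orbifold elliptic bundle over the orbifold abelian variety $A$, that is, $M$ is a Kodaira manifold. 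The step I expect to be the main obstacle is the orbifold bookkeeping: establishing the orbifold form of Theorem \ref{thm sorin}, checking that the descent of $\phi$ and the lifting of base automorphisms are well behaved over the singular locus of $B$, and verifying that rigidity survives the descent; the weak Bochner principle is precisely the tool that neutralizes the degeneration issues these steps would otherwise create.
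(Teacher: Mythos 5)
Your overall architecture (Istrati's decomposition, invariance under the elliptic action, reduction to a ``no vanishing tensors'' statement) resembles the paper's, and your Liouville argument for invariance is essentially the paper's own (a holomorphic homomorphism from the compact complex torus $T$ to the affine group ${\rm GL}(H^0(M,\,W_M))$ is trivial). But the central step of your proof --- descending $\phi$ itself to a holomorphic geometric structure $\overline{\phi}$ of affine type on the base ``via the splitting of $TM$ coming from the Vaisman structure'' --- does not work. The splitting provided by the Vaisman metric is only $C^\infty$, not holomorphic; a $T$-invariant holomorphic splitting of $0 \to V \to TM \to \pi^*TY \to 0$ would be a holomorphic principal connection on the elliptic orbibundle, which is obstructed by the Atiyah class and fails already for primary Kodaira surfaces (whose characteristic class is nonzero and of type $(1,1)$ on the K\"ahler base). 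More fundamentally, a $T$-invariant structure of order $k$ is a map on $R^k(M)$, and $R^k(M)/T$ is not $R^k(Y)$: invariance alone never converts $\phi$ into a geometric structure on the base. Everything downstream of this step collapses with it: the appeal to an (unproven) orbifold version of Theorem \ref{thm sorin}, the lifting of local automorphisms of $\overline{\phi}$ back to automorphisms of $\phi$ (unjustified even if $\overline{\phi}$ existed --- a base automorphism need not lift, and a lift need not preserve $\phi$), and, in statement (2), the claim that rigidity ``passes to $\overline{\phi}$.'' The paper never descends $\phi$: it descends only holomorphic \emph{tensors}. By \cite[Lemma 3.2]{Du}, non-local-homogeneity produces a nontrivial tensor $\tau$ vanishing at a point; contracting with the trivialization of $K_M$ makes it a section of $(T^*M)^{\otimes m}$, and an induction on $m$, contracting repeatedly with the fundamental vector field $X$, shows $\tau$ annihilates the vertical directions, hence is a section of $((TM/V)^*)^{\otimes m}$ and descends to the projective Calabi--Yau orbifold base, where the orbifold Bochner principle (\cite[Theorem 8.2]{GGK}, \cite[Theorem A]{CGGN}, together with Campana's orbifold version of Yau's theorem) forbids a zero. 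This induction is precisely the device that replaces your nonexistent $\overline{\phi}$.

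There is also a circularity you should notice: Theorem \ref{Bochner}, which you invoke twice as a black box, is not an independent input --- in the paper it is a by-product of (indeed, its proof \emph{is}) the tensor-descent argument just described, carried out inside the proof of Theorem \ref{Vaisman}(1). So quoting it does not discharge the real work; your proposal leaves the entire analytic core unproved. For statement (2), the paper again stays upstairs: it forms the juxtaposed structure $(\Psi,\,X)$, which is rigid and, by statement (1), locally homogeneous, takes the sheaf ${\rm Kill}_X$ of local Killing fields commuting with $X$, and pushes this \emph{sheaf of vector fields} (not the geometric structure) down to the base, verifying the Nomizu--Amores extension property separately over regular and orbifold points. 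That yields transitive local vector fields downstairs and forces the simply connected Calabi--Yau factor to be a point. Your closing universal-cover argument is the right idea for this very last step, but you have no legitimate way to reach it, since it is fed by the orbifold analogue of Theorem \ref{thm sorin}(2) applied to the structure $\overline{\phi}$ that cannot be constructed.
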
 

Notice that all Kodaira (primary and secondary) surfaces are known to admit Vaisman metrics \cite{Bel}: 
they are Vaisman Calabi-Yau complex surfaces. On the other hand, all Kodaira primary surfaces admit complex 
affine structures and other non-flat locally homogeneous holomorphic affine connections \cite{IKO, Vi}. 
Kodaira manifolds are higher dimensional generalizations of Kodaira surfaces: they have the structure of a 
nilmanifold with a left invariant complex structure \cite{GMPN}. They are examples of Vaisman Calabi-Yau 
manifolds which are principal elliptic pure orbibundles over an abelian variety \cite[Examples 7.2]{Is}.

A by-product of the proof of Theorem \ref{Vaisman} is the generalization to the case of a principal compact 
complex torus orbi-bundle with trivial canonical bundle over a orbifold K\"ahler Ricci flat manifold of a 
result in \cite{BD2} (see Theorem \ref{orbibundle}): all holomorphic geometric structures of affine type on 
them are locally homogeneous and invariant by the principal fibration.

In contrast, minimal complex surfaces, which are principal elliptic bundles over compact Riemann surfaces of 
genus at least two, and having an odd first Betti number are known to admit non-locally homogeneous 
holomorphic affine connections \cite{Du1}. They also admit nontrivial holomorphic one-forms with non-empty 
vanishing locus that are pulled back from the base. It was proved in \cite{Bel} that all those minimal 
surfaces with Kodaira dimension one and odd first Betti number actually admit Vaisman metrics. Therefore, 
Theorem \ref{Vaisman} and the weak Bochner principle (Theorem \ref{Bochner}) do not hold for all Vaisman 
manifolds. Other examples of Vaisman manifolds which admit non-locally homogeneous rigid holomorphic 
geometric structures of affine type and nontrivial holomorphic tensors with non-empty vanishing locus are 
the diagonal Hopf manifolds (see the construction of those holomorphic geometric structures in Remark 
\ref{diagonal Hopf} and recall that diagonal Hopf manifolds are also known to admit Vaisman metrics 
\cite{Bel}).

Also in this article we extend the result proved in \cite[Theorem D]{BCDG} on rigid holomorphic 
geometric structures on simply connected manifolds with trivial canonical line bundle and prove the 
following:

\begin{theorem}\label{main} Let $M$ be a compact complex manifold $M$ with trivial canonical line bundle 
that admits a rigid holomorphic geometric structure of affine type. Then the following two statements hold: 
\begin{enumerate}

\item Assume that $TM$ is semistable with respect to some Gauduchon metric on $M$. Then the fundamental 
group of $M$ is infinite.

\item Assume that $\dim H^0(M,\, T^*M) \ =\ \dim H^0(M,\, TM)$. Then the fundamental group of $M$ is infinite.
\end{enumerate}
\end{theorem}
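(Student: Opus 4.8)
The plan is to prove both statements by contradiction, reducing in each case to a simply connected model and then extracting a contradiction from the triviality of the canonical bundle by an essentially algebraic volume-form computation; this refines the method of \cite[Theorem D]{BCDG}. Assume that $\pi_1(M)$ is finite. Then the universal covering $p\colon \widetilde M \to M$ is a finite unramified holomorphic covering, so $\widetilde M$ is compact and simply connected, $K_{\widetilde M} = p^*K_M$ is holomorphically trivial, and $\widetilde\phi = p^*\phi$ is again a rigid holomorphic geometric structure of affine type. Under hypothesis (1), $T\widetilde M = p^*TM$ is semistable of degree zero for the pulled-back Gauduchon metric, since slope and semistability are preserved under finite unramified pullback. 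Throughout I will use the Serre duality isomorphisms $H^0(M,TM) \cong H^n(M,T^*M)^*$ and $H^0(M,T^*M) \cong H^n(M,TM)^*$ (with $n = \dim_{\mathbb C} M$), which are available precisely because $K_M$ is trivial.

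The common endgame is the following. By Gromov's theory for rigid holomorphic geometric structures, $\widetilde\phi$ is locally homogeneous on a dense Zariski-open subset and its local holomorphic Killing fields form a finite-dimensional Lie algebra; since $\widetilde M$ is simply connected, these local Killing fields extend (by a Nomizu--Amores--Gromov type extension) to a finite-dimensional Lie algebra $\mathfrak g \subseteq H^0(\widetilde M, T\widetilde M)$ of global holomorphic vector fields whose values span the tangent space along the open dense orbit. The decisive observation is that $\Lambda^n T\widetilde M = K_{\widetilde M}^{-1} = \mathcal O_{\widetilde M}$ is trivial: if the generic orbit is top-dimensional, then choosing $X_1, \dots, X_n \in \mathfrak g$ independent at one point, the section $X_1 \wedge \cdots \wedge X_n \in H^0(\widetilde M, \Lambda^n T\widetilde M) = H^0(\widetilde M, \mathcal O_{\widetilde M}) = \mathbb C$ is a nonzero constant, hence nowhere vanishing, so $X_1, \dots, X_n$ are everywhere independent and $\widetilde M$ is complex parallelizable. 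By Wang's theorem a compact simply connected complex parallelizable manifold is a connected compact complex Lie group, hence a complex torus; but a positive-dimensional torus is not simply connected, a contradiction. Thus $\pi_1(M)$ must be infinite.

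Everything therefore hinges on upgrading Gromov's merely generic local homogeneity to a top-dimensional (generic) orbit, and this is where the two hypotheses enter, each substituting for the Bochner principle used in the K\"ahler case of Theorem \ref{thm sorin}. Under (1), let $\mathcal F \subseteq T\widetilde M$ be the involutive coherent subsheaf generated by $\mathfrak g$; being generated by global sections it is torsion-free with $\deg \mathcal F \ge 0$, while semistability of the slope-zero bundle $T\widetilde M$ forces $\deg \mathcal F \le 0$. The plan is to play this slope balance against the foliation structure of $\mathcal F$ to conclude that $\mathcal F$ is saturated of full rank, i.e.\ that the orbits are top-dimensional, feeding the endgame above. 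Under (2), the equality $\dim H^0(M,T^*M) = \dim H^0(M,TM)$ combined with the Serre duality isomorphisms is intended to force a weak Bochner principle --- that no nonzero holomorphic vector field vanishes anywhere --- which again makes the wedge $X_1 \wedge \cdots \wedge X_n$ nonvanishing; alternatively, when $H^0(M,T^*M) \ne 0$, one aims to show directly that a holomorphic one-form is $d$-closed, so that a nonzero Albanese map $M \to \mathrm{Alb}(M)$ exists and $b_1(M) > 0$, whence $\pi_1(M)$ is infinite without even passing to the cover.

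The hard part, in both cases, is exactly this promotion step. For (1) one must exclude intermediate orbit dimensions: the slope equality $\mu(\mathcal F) = 0$ alone is consistent with a proper invariant subsheaf, so the involutivity of $\mathcal F$ and the geometry of the orbit foliation must be used to rule out a nontrivial quotient --- presumably by an inductive descent to the leaf space, which is again of Calabi--Yau type. For (2) the delicate point is extracting closedness (or the weak Bochner principle) from a purely cohomological equality with no Ricci-flat or Hermitian--Einstein metric at hand; that holomorphic one-forms on non-K\"ahler Calabi--Yau manifolds need \emph{not} be closed in general --- as the simply connected examples constructed later in this paper show --- is precisely why the dimension equality must be calibrated to force closedness. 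I expect step (1)'s exclusion of lower-dimensional orbits and step (2)'s closedness to be the two genuine obstacles.
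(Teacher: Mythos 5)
There is a genuine gap, and it sits at the very start of your argument rather than only in the two steps you defer. Your ``endgame'' presupposes that Gromov's theory hands you a finite-dimensional Lie algebra $\mathfrak g$ of local Killing fields \emph{whose orbits are generically positive-dimensional (indeed dense)}. No such statement holds for a general rigid structure: rigidity guarantees that the sheaf of local Killing fields is locally constant with finite-dimensional fiber, but that fiber can perfectly well be zero, and Gromov's open-dense orbit theorem requires a dense orbit as a \emph{hypothesis}, it does not produce one. Producing any symmetry at all is precisely the hard content here, and the paper gets it from \cite[Theorem 5.1]{BCDG} (Theorem \ref{main lemma}): for a compact simply connected $M$ with $K_M$ trivial and a rigid affine-type structure, there is a holomorphic submersion $\pi\colon M \to B$ onto a simply connected Moishezon manifold with $K_B$ globally generated and \emph{non-trivial}, whose fibers are complex tori and are exactly the orbits of a connected abelian subgroup $A \subset \mathrm{Aut}(M,\phi)$. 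You never invoke this theorem nor prove a substitute, so your sheaf $\mathcal F$ (generated by $\mathfrak g$) may a priori be the zero sheaf, and both your slope argument and your Serre-duality argument are then about nothing. On top of this, the two ``promotion'' steps that would make your strategy work are explicitly left open, so the proposal is an outline, not a proof.

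It is also worth noting that your intended targets are not the ones the hypotheses actually deliver, which is why the deferred steps resist. In the paper, statement (1) does \emph{not} conclude that orbits are top-dimensional --- they are the fibers of $\pi$, hence never top-dimensional, since $K_B \neq \mathcal O_B$ forces $\dim B > 0$. Instead, semistability is used on the evaluation map $\Phi\colon \mathcal A = M \times \mathfrak a \to TM$ of \eqref{e2}: since $TM$ is semistable of degree $0$ and $\Phi(\mathcal A)$ is a quotient of the trivial bundle, $\deg \Phi(\mathcal A) = 0$, hence $\deg \ker\Phi = 0$; the kernel is then a reflexive, polystable, degree-zero subsheaf of a trivial bundle, so of the form $M \times V_0$, and $V_0 = 0$ because its constant sections are identically vanishing fundamental vector fields. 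Thus $\Phi$ identifies $\mathcal A$ with $T_\pi$, so $\det T_\pi = \mathcal O_M$ and $\pi^*\det TB = \mathcal O_M$, giving $K_B = \mathcal O_B$ --- contradicting Theorem \ref{main lemma}(ii). The contradiction terminates at the non-isotriviality of $\pi$, not at Wang's theorem, and no local homogeneity is ever established. Similarly, for statement (2) the dimension equality is not used to force closedness of $1$-forms (which, as you yourself note, fails in general --- see Theorem \ref{simply connected with forms}); rather, one shows that every nonzero $\theta \in H^0(M,\Omega^1_M)$ restricts nontrivially to \emph{every} fiber (if $\theta$ killed one fiber, the constant functions $\theta(v)$, $v \in \mathfrak a$, would vanish identically, forcing $\theta$ to be pulled back from $B$, impossible since $B$ is simply connected Moishezon). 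Restriction to a fiber is then injective, so $\dim H^0(M,\Omega^1_M) \le \dim M - \dim B$, while the hypothesis gives $\dim H^0(M,\Omega^1_M) = \dim H^0(M,TM) \ge \dim M - \dim B$; equality makes $\Phi$ an isomorphism again, and the same $K_B = \mathcal O_B$ contradiction ends the proof. If you want to salvage your approach, the missing ingredient you must either import or reprove is exactly Theorem \ref{main lemma}.
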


An analogous result was proved in \cite[Theorem D]{BCDG} in several other cases: for threefolds, for 
manifolds in the Fujiki class $\mathcal C$ and for manifolds having algebraic dimension at most one. All 
those results point in the direction of the conjecture formulated in \cite{BCDG} which asserts that simply 
connected compact complex manifolds with trivial canonical line bundle do not admit holomorphic rigid 
geometric structures of affine type.

Theorem \ref{Vaisman} and Theorem \ref{main} also hold for some important geometric structures which are 
not of affine type (see Definition \ref{def}), namely for holomorphic projective connections and for 
holomorphic conformal structures. This is because on manifolds with trivial canonical line bundle, these 
two geometric structures lift to global representatives which are of affine type, namely, a holomorphic 
affine connection and a holomorphic Riemannian metric respectively. The particular case of holomorphic 
Riemannian metrics was settled in \cite{BD3}. Statement (2) of Theorem \ref{main} generalizes this result 
to any self-dual holomorphic tangent bundle (i.e., $TM$ is isomorphic to its dual $T^*M$).

The statement (1) in Theorem \ref{main} is proved in Section \ref{sest1} and the statement (2) is proved in 
Section \ref{sest2}. Theorem \ref{Vaisman} together with the weak Bochner principle for Vaisman Calabi-Yau 
manifolds (Theorem \ref{Bochner}) are proved in Section \ref{seva}. Theorem \ref{simply connected with 
forms} in Section \ref{forms} constructs simply connected elliptic principal bundles over a $K3$-surface 
that admit non-closed holomorphic one-forms.

\section{Holomorphic rigid geometric structures}

This section introduces the notion of a holomorphic geometric structure. We follow the presentation of rigid 
geometric structures given in \cite{Gr} (see also \cite{Ben,DG}).

Let $M$ be a connected complex manifold of complex dimension $n$. Consider, for any positive integer $k 
\,\geq \,1$, the associated principal bundle of $k$-th order frames $R^k(M) \,\longrightarrow\, M$: it is 
the holomorphic principal bundle formed by the $k$-jets of local holomorphic coordinates on the manifold 
$M$. The structure group of $R^k(M)$ is the group $D^k ({\mathbb C}^n)$ of $k$-jets of local 
biholomorphisms of $\mathbb{C}^n$ fixing the origin. We denote it simply by $D^k$ and we recall that $D^k$ 
is actually a complex affine algebraic group.

\begin{definition}\label{def}
A {\it holomorphic geometric structure} of order $k$ on the complex manifold $M$ is a holomorphic
$D^k$-equivariant map $\phi\,:\, R^k(M)\, \longrightarrow\, Z$, with $Z$ being a complex
algebraic manifold endowed with an algebraic $D^k$-action. Moreover, the geometric
structure $\phi$ is said to be of {\it affine type} if $Z$ is a complex affine variety.
\end{definition}

All holomorphic tensors on $M$ (meaning holomorphic sections of a bundle $(T^*M)^{\otimes p}\otimes 
(TM)^{\otimes q}$, for some nonnegative integers $p,\,q$) are holomorphic geometric structures of affine 
type of order one. Indeed, any holomorphic tensor on $M$ can be seen as a holomorphic ${\rm GL}(n, \mathbb 
C)$-equivariant map from the first order frame bundle $R^1(M)$ to a linear complex algebraic representation 
of ${\rm GL}(n, \mathbb C)$, namely $(({\mathbb C}^n)^*)^{\otimes p}\otimes {\mathbb C^n}^{\otimes q}$, 
where ${\mathbb C}^n$ is the standard representation of ${\rm GL}(n, \mathbb C)$. Holomorphic affine 
connections are holomorphic geometric structures of affine type of order two \cite{Gr,DG,Ben}. Holomorphic 
foliations, holomorphic projective connections and holomorphic conformal structures are holomorphic 
geometric structures of non-affine type.

There is the following natural notion of symmetry of a holomorphic geometric structure.

A (local) biholomorphism of $M$ preserves a holomorphic geometric structure $\phi$ if its canonical lift to 
$R^k(M)$ fixes each fiber of the map $\phi$. Such a (local) biholomorphism is called a {\it (local) 
automorphism} of $\phi$. The group of biholomorphisms of $M$ preserving the geometric structure $\phi$ is 
called the automorphism group of $(M,\, \phi)$ and will be denoted by ${\rm Aut}(M, \,\phi)$.

A (local) holomorphic vector field on $M$ is called a (local) {\it Killing vector field} with 
respect to $\phi$ if its (local) flow acts on $M$ by (local) biholomorphisms preserving $\phi$ (i.e., it
acts by (local) automorphisms of $\phi$).

If the sheaf of local Killing vector fields with respect to $\phi$ is transitive on $M$, the holomorphic 
geometric structure $\phi$ is called {\it locally homogeneous}.

\begin{definition}\label{def-rigid}
A holomorphic geometric structure $\phi$ is called {\it rigid} of order $l$ in 
Gromov's sense if any local biholomorphism preserving $\phi$ is uniquely determined by its 
$l$-jet at any given point.
\end{definition}

Holomorphic affine connections are rigid of order one in Gromov's sense (see \cite{Gr,DG, Ben}). The 
rigidity comes from the fact that local biholomorphisms fixing a point and preserving a connection 
linearize in exponential coordinates, so they are indeed completely determined by their differential at the 
fixed point.

Holomorphic Riemannian metrics, holomorphic projective connections and holomorphic 
conformal structures in dimension $\geq 3$ are all rigid holomorphic geometric structures,
while holomorphic symplectic structures and holomorphic foliations are non-rigid geometric 
structures~\cite{DG}.

The sheaf of local Killing vector fields of a holomorphic rigid geometric structure $\phi$ is 
locally constant. Its fiber is a finite dimensional Lie algebra which is called {\it the 
Killing algebra} of $\phi$ \cite{DG,Gr}.

An extendibility result, proved first by Nomizu for analytic Riemannian metrics, \cite{No}, and generalized 
in \cite{Am, Gr,DG} for analytic rigid geometric structures, asserts that every point $m$ in $M$ has an 
open neighborhood $V_m$ in $M$ such that for any connected open subset $W\,\subset\, V_m$, any local 
Killing vector field defined over $W$ uniquely extends to a local Killing vector field defined on $V_m$. 
Therefore, by monodromy principle, all local Killing vector fields extend along any path in the manifold, 
and the extension depends only on the homotopy class of the path. In particular, for simply connected 
manifolds, local Killing vector fields extend to the entire manifold as global (uni-valued) Killing vector 
fields.

Assume now that $M$ is compact and simply connected, The global holomorphic vector fields are complete and 
they define an action of a complex Lie group. We then get that the local holomorphic Killing vector fields 
for holomorphic rigid geometric structures globalize and define an action of a connected complex Lie 
subgroup lying in ${\rm Aut}(M,\, \phi)$ and acting by automorphisms on $M$ while preserving the rigid 
geometric structure. The corresponding Lie algebra is the vector space of globally defined holomorphic 
Killing vector fields on $M$.

In this context the following result was proved in \cite[Theorem 5.1]{BCDG}.

\begin{theorem}[\cite{BCDG}]
\label{main lemma}
Let $M$ be a compact simply connected complex manifold with trivial canonical line bundle $K_M$. Assume that
$M$ bears a holomorphic rigid geometric structure $\phi$ of affine type. Then the following three
statements hold:
\begin{enumerate}
\item[(i)]\, There exists a holomorphic submersion $\pi \,:\, M\,\longrightarrow\, B$ to a simply connected 
Moishezon manifold $B$ with globally generated canonical line bundle $K_B$ such that the fibers of $\pi$ are 
complex tori.

\item[(ii)]\, The above fibration $\pi$ is not isotrivial. Equivalently, $K_B$ is not trivial.

\item[(iii)]\, There exists a maximal connected abelian subgroup $A$ of the automorphism group 
$\rm{Aut}(M,\, \phi)$ whose orbits coincide with the fibers of $\pi$. Moreover, $A$ is noncompact and its 
(real) maximal compact subgroup $K$ acts freely and transitively on the fibers of $\pi$ (hence $M$ is a 
$C^\infty$ principal $K$--bundle over $B$).
\end{enumerate}
\end{theorem}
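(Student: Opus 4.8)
The plan is to convert the two hypotheses into a large symmetry group and then read off the geometry from its orbits. Since $\phi$ is rigid and $M$ is simply connected, the extendability and monodromy discussion recalled above shows that every local Killing field of $\phi$ globalizes; because $M$ is compact these global holomorphic vector fields are complete and integrate to a holomorphic action of a connected complex Lie group $G\subseteq\mathrm{Aut}(M,\phi)$, whose Lie algebra $\mathfrak g$ is the space of global Killing fields. First I would use the triviality of $K_M$ to fix a nowhere-vanishing holomorphic volume form $\nu$ and replace $\phi$ by the pair $(\phi,\nu)$, which is again rigid of affine type; the gain is that the fields in $\mathfrak g$ now preserve $\nu$, a property I will use to control both the fibers and the base.

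Next I would run Gromov's theory for the rigid structure $(\phi,\nu)$. On a dense $G$-invariant open set $M^\circ$ the evaluation map $\mathfrak g\to T_xM$ has locally constant rank, so the $G$-orbits foliate $M^\circ$ by locally closed complex submanifolds of constant dimension. The hard part will be to promote this generic local homogeneity to a genuine global holomorphic submersion $\pi\colon M\to B$ whose fibers are the (closures of the) orbits: one must rule out degeneration of the foliation, show that the orbit closures are compact complex submanifolds of a single dimension sweeping out all of $M$, and check that the leaf space is a complex manifold. Here I would lean on compactness of $M$, the invariant volume $\nu$ and a properness argument for the orbits; simple connectivity of $M$ together with connectedness of the fibers then forces $B$ to be simply connected.

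With $\pi$ constructed, I would identify the fibers. Each fiber is a single compact orbit of an abelian subgroup $A\subseteq G$ with discrete isotropy, hence a compact connected complex abelian Lie group, i.e.\ a complex torus; equivalently, it is parallelized by the commuting Killing fields tangent to it, and a compact complex parallelizable homogeneous space of an abelian group is a torus. This produces the maximal connected abelian subgroup $A\subseteq\mathrm{Aut}(M,\phi)$ of statement (iii), whose orbits are exactly the fibers, and exhibits its real maximal compact subgroup $K$ as a real torus acting simply transitively on each fiber, so that $M$ is a $C^\infty$ principal $K$-bundle over $B$.

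Finally I would analyze the base and the variation of the fibers. From the canonical bundle formula $K_M\cong\pi^*K_B\otimes K_{M/B}$, the triviality of $K_M$, and the triviality of the relative canonical bundle along the torus fibers, I would express $K_B$ through the variation of the family of fiber tori; the positivity carried by this period map is what I expect to force $B$ into the Moishezon class and to make $K_B$ globally generated. The same variation gives statement (ii): if the family were isotrivial, then $K_{M/B}$ would be a pullback from $B$ and $K_B$ would be trivial, so $M$ would be a holomorphic principal torus bundle over a simply connected Moishezon Calabi--Yau base, reducing matters to the classical torus/K\"ahler situation and contradicting either simple connectivity or the non-homogeneity forced by rigidity. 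Hence the fibration is non-isotrivial, $K_B$ is nontrivial, and in particular $A$ cannot be compact, since a compact $A$ would make $\pi$ a holomorphic torus bundle and thus isotrivial.
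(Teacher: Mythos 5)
This theorem is not proved in the paper at all: it is imported verbatim from \cite[Theorem 5.1]{BCDG}, so the only comparison available is with that external proof, and judged on its own terms your text is an outline rather than a proof. Your opening move is correct and matches the standard setup recalled in Section 2 of the paper: simple connectedness plus rigidity globalizes the local Killing fields, these integrate to a holomorphic action of a connected complex Lie group $G\subseteq \mathrm{Aut}(M,\phi)$, and juxtaposing $\phi$ with a holomorphic volume form $\nu$ keeps the structure rigid and of affine type. The trouble starts exactly where you write ``the hard part will be'': the passage from the generic orbit foliation to a genuine holomorphic submersion $\pi\colon M\to B$ with compact torus fibers \emph{is} the theorem, and the mechanism you offer --- compactness of $M$, the invariant volume $\nu$, and ``a properness argument'' --- cannot deliver it. A linear flow on a compact complex torus preserves the flat (rigid, affine-type) complex affine structure and the translation-invariant volume form, yet its orbits can be dense and non-closed, their closures are real (not complex) subtori of varying dimension, and the leaf space is non-Hausdorff; so those three ingredients, which make no use of simple connectedness, are demonstrably insufficient. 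Moreover properness is precisely what fails here, since the theorem itself asserts that the relevant group $A$ is noncompact. Whatever forces the orbits to be compact complex submanifolds must use simple connectedness in a much deeper way than merely globalizing Killing fields, and that missing mechanism is the actual content of the proof in \cite{BCDG}.

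The later steps have gaps of the same kind. Abelianness of the group acting along the fibers, discreteness of its isotropy, and compactness of its orbits are all asserted, not proved, and the \emph{maximality} of $A$ in $\mathrm{Aut}(M,\phi)$ is never addressed. Even granting that each fiber is $A/\Gamma$ with $\Gamma$ discrete and the quotient compact, it does not follow that the maximal compact subgroup $K\subset A$ acts transitively on the fiber: $\mathbb{C}^*/q^{\mathbb{Z}}$ is an elliptic curve, while the maximal compact subgroup $S^1\subset\mathbb{C}^*$ has one-real-dimensional orbits in it; obtaining a single $K$ acting freely and transitively on \emph{all} fibers is a nontrivial statement about how the stabilizers vary, and it is what makes $M$ a $C^\infty$ principal $K$-bundle. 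For the base, ``the positivity carried by this period map is what I expect to force $B$ into the Moishezon class and to make $K_B$ globally generated'' contains no argument, although these are among the hardest conclusions of the theorem. Finally, your proof of (ii) ends in an unresolved dichotomy (``contradicting either simple connectivity or the non-homogeneity forced by rigidity''): a holomorphic principal torus bundle over a simply connected base can perfectly well be simply connected (Calabi--Eckmann manifolds), so the first horn is not automatically a contradiction, and rigidity by itself forces no non-homogeneity. In short, the skeleton is plausible and broadly consistent with \cite{BCDG}, but every load-bearing step is a placeholder.
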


We will use this result to prove Theorem \ref{main}. Statement (1) in Theorem \ref{main} is proved in 
Section \ref{section semi-stable} and statement (2) is proved in in Section \ref{section self-dual}.

\section{Semistable holomorphic tangent bundle}\label{section semi-stable}

Let us first recall the notion of stability in the framework of compact complex (non necessarily K\"ahler) 
manifolds.

Consider a compact complex connected manifold $M$ of complex dimension $n$. A Hermitian metric on the 
holomorphic tangent bundle $TM$ is called a \text{Gauduchon metric} if the corresponding real $(1,\, 
1)$--form $\omega$ on $M$ satisfies the equation
$$
\partial\overline{\partial}\omega^{n-1}\,=\, 0.
$$

Gauduchon proved in \cite{Ga} that any compact complex manifold admits a Gauduchon metric \cite{Ga}. In the 
sequel we fix such a Gauduchon metric on $M$, and denote by $\omega_0$ the $(1, \,1)$--form on 
$M$ associated to the Gauduchon metric.

Let $\Omega^{1,1}_{cl}(M,\, {\mathbb R})$ denote the space of all globally
defined $d$--closed real $(1,\, 1)$--forms on $M$. The Bott--Chern cohomology of $M$ is defined as 
$$
H^{1,1}_{BC}(M,\, {\mathbb R})\ :=\ \frac{\Omega^{1,1}_{cl}(M,\, {\mathbb R})}{
\{\sqrt{-1}\partial\overline{\partial}\alpha\,\mid\, \alpha\,\in\,
C^{\infty}(M,\,{\mathbb R})\}} .
$$
The integration mapping 
$$
\Omega^{1,1}_{cl}(M,\, {\mathbb R})\ \longrightarrow\ {\mathbb R},\ \ \
\alpha\ \longmapsto\ \int_M \alpha\wedge \omega^{n-1}_0
$$
descends to a linear form on the Bott-Chern cohomology $H^{1,1}_{BC}(M,\, {\mathbb R})$.

For any holomorphic line bundle $L$ on $M$, choose a Hermitian metric $h_L$ on $L$, and consider the 
associated element $c(L)\,\in\, H^{1,1}_{BC}(M,\, {\mathbb R})$ given by the curvature of the Chern 
connection on $L$ associated to $h_L$. This cohomology class $c(L)$ does not depend on the choice of the 
Hermitian metric $h_L$. Clearly, $c(L)$ vanishes for the trivial holomorphic bundle.

For any torsionfree coherent analytic sheaf $F$ on the manifold $M$, define
$$
\text{degree}(F)\,:=\, \int_M c(\det F)\wedge \omega^{n-1}_0\, \in\, \mathbb R\, ,
$$
where $\det F$ is the determinant line bundle for
$F$ \cite[p.~166, Proposition~6.10]{Ko}. The real number
$\text{degree}(F)/\text{rank}(F)$ is called the \textit{slope} of $F$ and it is denoted
by $\mu(F)$. 

A torsionfree coherent analytic sheaf $V$ on $M$ is called
\textit{stable} if
$$
\mu(F)\, <\, \mu(V)
$$
for all coherent analytic subsheaf $F\, \subset\, V$ with $0\, <\, \text{rank}(F)\,
<\, \text{rank}(V)$. A torsionfree coherent analytic sheaf $V$ on $M$ is called
\textit{semistable} if
$$
\mu(F)\, \leq \, \mu(V)
$$
for all coherent analytic subsheaf $F\, \subset\, V$ with $0\, <\, \text{rank}(F)\,
<\, \text{rank}(V)$.

If $V$ is a direct sum of stable sheaves of the same slope, then it is
called \textit{polystable}. 

The reader is referred to \cite{LT} for more details and results about the stability.

Let us now give the proof of statement (i) in Theorem \ref{main}.

\subsection{Proof of statement (i) in Theorem \ref{main}}\label{sest1}

Let $M$ be a simply connected compact complex manifold with trivial canonical line bundle and $\Psi$ a rigid 
holomorphic geometric structure of affine type on $M$.

Let
\begin{equation}\label{e1}
\pi\ :\ M\ \longrightarrow\ B
\end{equation}
be the holomorphic submersion given by Theorem \ref{main lemma}, and denote by $A$ the connected abelian 
Lie subgroup of ${\rm Aut}(M, \Psi)$ in Theorem \ref{main lemma}(iii) whose orbits coincide with the fibers 
of $\pi$.

Fix a Gauduchon metric $\omega$ on $M$. Assume that the holomorphic tangent bundle $TM$ of $M$ is 
semistable with respect to $\omega$.

Denote by $\mathfrak a$ the Lie algebra of $A$. The trivial holomorphic vector bundle on $M$
$$
M\times {\mathfrak a}\ \longrightarrow\ M
$$
with fiber $\mathfrak a$ will be denoted by $\mathcal A$. The differential of the action of $A$ on $M$
produces an ${\mathcal O}_M$--linear holomorphic homomorphism 
\begin{equation}\label{e2}
\Phi\ :\ \mathcal A \ \longrightarrow\ TM.
\end{equation}

Notice that $\Phi(\mathcal A)$ coincides with $T_{\pi}$, with $T_\pi\, \subset\, TM$ is the vertical
tangent bundle for the projection $\pi$ in \eqref{e1}. 

Consider the kernel
\begin{equation}\label{e3}
{\mathcal K}\ := \ \text{kernel}(\Phi) \ \subset\ \mathcal A
\end{equation}
of the homomorphism $\Phi$ in \eqref{e2}. We have $\text{degree}(TM)\,=\, 0$ because the holomorphic
line bundle $\det(TM)\,=\, \bigwedge^{\rm top}TM$ is trivial (recall the assumption that the
canonical line bundle of $M$ is trivial). Since the holomorphic tangent bundle $TM$ is semistable of
degree zero, and $\Phi(\mathcal A)\, \subset\, TM$, it follows that
\begin{equation}\label{e4}
\text{degree}(\Phi(\mathcal A)) \ \leq \ 0.
\end{equation}
On the on other hand, $\Phi(\mathcal A)$ is a quotient of the holomorphically trivial vector
bundle $\mathcal A$, which implies that
\begin{equation}\label{e5}
\text{degree}(\Phi(\mathcal A)) \ \geq \ 0,
\end{equation}
because the trivial vector bundle is semistable.
Combining \eqref{e4} and \eqref{e5} we conclude that $\text{degree}(\Phi(\mathcal A)) \ = \ 0$.
Since $$\text{degree}({\mathcal A})\ =\ 0\ =\ \text{degree}(\Phi(\mathcal A)),$$ it now follows that
\begin{equation}\label{dk}
\text{degree}({\mathcal K}) \ = \ 0
\end{equation}
(see \eqref{e3}).

Note that $\Phi(\mathcal A)$ is torsionfree because it is a subsheaf of the torsionfree sheaf $TM$. This 
and the fact that $\mathcal A$ is locally free together imply that $\mathcal K$ is reflexive \cite[Ch.~V, 
p.~153, Proposition 4.13]{Ko}. Since the trivial vector bundle $\mathcal A$ is polystable of degree zero, 
and $\mathcal K$ is a reflexive subsheaf of it of degree zero (see \eqref{dk}), it follows that $\mathcal 
K$ is a polystable subbundle of $\mathcal A$ of degree zero. All stable subbundles of $\mathcal A$ of 
degree zero are of the form $M\times \ell$, where $\ell\, \subset\, {\mathfrak a}$ is a line. Consequently, 
there is a linear subspace $V_0\, \subset\, {\mathfrak a}$ such that $\dim V_0\ =\ {\rm rank}({\mathcal 
K})$ and
$$
{\mathcal K}\ =\ M\times V_0\ \subset\ M\times {\mathfrak a}\ =\ {\mathcal A}.
$$

Take any $v\, \in\, V_0\, \subset\, {\mathfrak a}$. Note that the vector field $\Phi(v)\,\in\, H^0(M,\, 
TM)$ (see \eqref{e2}) vanishes identically because $\Phi(v)\,\in\, H^0(M,\, {\mathcal K})$. This implies 
that $v\,=\, 0$. Therefore, we conclude that $\Phi$ defines an isomorphism
\begin{equation}\label{e6}
{\mathcal A}\ \simeq \ T_\pi \ \subset\ TM,
\end{equation}
where $T_\pi$ as before is the vertical tangent bundle for the projection $\pi$ in \eqref{e1}. From
\eqref{e6} it follows that $T_\pi$ is a trivial bundle, and consequently,
$$\det T_\pi\ =\ {\mathcal O}_M .$$
Hence we have
\begin{equation}\label{e7}
\det (\pi^*TB)\ =\ \pi^*\det (TB) \ =\ \det (TM)\otimes (\det T_\pi)^* \ =\ {\mathcal O}_M.
\end{equation}
Since the fibers of $\pi$ are compacts and  connected, from \eqref{e7} we will deduce below 
that $\det (TB)\,=\, {\mathcal O}_B$.  We will show that for a holomorphic line
bundle $\mathcal L$ on
$B$ with the property that the pullback $\pi^* {\mathcal L}$ on $M$ is holomorphically trivial, the following
holds:
\begin{equation}\label{rs}
{\mathcal L} \ =\ {\mathcal O}_B.
\end{equation}
Since the fibers of $\pi$ are  connected, we  have  $\pi_*{\mathcal O}_M\,=\, {\mathcal O}_B$; on each
reduced fiber of $\pi$, the space of global functions (defined on the fiber) are constants because the fiber
is compact and connected. Consequently, the projection formula gives that
$$
\pi_*\pi^*{\mathcal L}\ =\ {\mathcal L}\otimes \pi_*{\mathcal O}_M\ =\ {\mathcal L}.
$$

Fix a holomorphic isomorphism
between $\pi^*\mathcal L$ and ${\mathcal O}_M$.

On the other hand, we have
\begin{equation}\label{rs2}
{\mathbb C}\,=\, H^0(M,\, {\mathcal O}_M) \, =\, H^0(M, \, \pi^* {\mathcal L})\,=\, H^0(B,\,
\pi_*\pi^*{\mathcal L})\, =\, H^0(B,\, {\mathcal L}).
\end{equation}
Using \eqref{rs2}, a nonzero section of $\pi^* {\mathcal L}\,=\, {\mathcal O}_M$ gives
a section of $\mathcal L$ which trivializes $\mathcal L$. This proves \eqref{rs}

Thus we have $\det (TB)\,=\, {\mathcal O}_B$. Note that
this is in contradiction with the statement (ii) in Theorem \ref{main lemma} and hence
the proof in the case where $M$ is simply connected is complete.

If the fundamental group of $M$ is finite, then there exists a finite unramified covering of $M$ which is 
simply connected and endowed with the pull-back of $\Phi$ which is a rigid holomorphic geometric structure 
of affine type. This leads to a contradiction as seen above.

\section{Self-dual holomorphic tangent bundle} \label{section self-dual} 

Now assume that
\begin{equation}\label{e8}
\dim H^0(M,\, T^*M) \ =\ \dim H^0(M,\, TM).
\end{equation}

Notice that an interesting particular case where the condition \eqref{e8} is satisfied is when the 
holomorphic tangent bundle $TM$ is self-dual.

Let us now give the proof of point (ii) in Theorem \ref{main}.

\subsection{Proof of statement (ii) in Theorem \ref{main}}\label{sest2}

As before, assume that $M$ is simply connected with trivial canonical bundle and $\Psi$ a rigid holomorphic 
geometric structure of affine type on it. Consider the holomorphic submersion in \eqref{e1}. We use the 
same notation as in the previous Section.

We have seen that $M$ bears global Killing holomorphic vector fields: they are the fundamental vector 
fields for the holomorphic action of the abelian complex Lie subgroup $A \, \subset\, {\rm Aut}(M, \phi)$
acting transitively on the fibers of $\pi$. Therefore,
$$\dim H^0(M,\, T^*M) \ =\ \dim H^0(M,\, TM) \geq \ \dim A \ \geq \ \dim M - \dim B\ >\ 0.$$

Take any nonzero holomorphic $1$-form
\begin{equation}\label{eth}
0\ \not=\ \theta\ \in\ H^0(M,\, \Omega^1_M)\ =\ H^0(M,\, T^*M).
\end{equation}
We will show that there is some $b\, \in\, B$ such that the section of $\Omega^1_{\pi^{-1}(b)}$ obtained by 
restricting $\theta$ (see \eqref{eth}) to the compact complex torus $\pi^{-1}(b)$ is nonzero. To prove this 
by contradiction, assume that the section of $\Omega^1_{\pi^{-1}(b)}$ obtained by restricting $\theta$ to 
the compact complex torus $ \pi^{-1}(b)$ is zero for all $b\, \in\, B$. Then there is a section $\theta'\, 
\in\, H^0(B,\, \Omega^1_B)$ such that $$\theta\ =\ \pi^*\theta'.$$

Since $B$ is Moishezon, holomorphic forms on $B$ are closed. The manifold $B$ being also simply connected, 
this implies that $H^0(B,\, \Omega^1_B)\,=\,0$. Consequently, $\theta'$ and (hence also) $\theta$ vanish 
identically. This contradicts \eqref{eth}, and hence there is some $b\, \in\, B$ such that the section of
$\Omega^1_{\pi^{-1}(b)}$ obtained by restricting $\theta$ to $\pi^{-1}(b)$ is nonzero.

A stronger version of the above statement will be proved. We will now show that the 
section of $\Omega^1_{\pi^{-1}(b)}$ obtained by restricting $\theta$ is nonzero for
\textit{every} $b\, \in\, B$.

To prove this by contradiction, assume that the section of $\Omega^1_{\pi^{-1}(b)}$ obtained by restricting 
$\theta$ is identically zero for some $b\, \in\, B$. Take any $v\, \in\, {\mathfrak a}$, and consider the 
holomorphic function $\theta (v)$ on $M$. Since the section of $\Omega^1_{\pi^{-1}(b)}$ obtained by 
restricting $\theta$ is identically zero, and the vector field $v$ is vertical for the projection $\pi$, we 
know that the function $\theta (v)$ vanishes on $\pi^{-1}(b)$. This implies that the (constant) function 
$\theta (v)$ vanishes identically on $M$.

On the other hand, the image of the homomorphism $\Phi$ (see \eqref{e2}) is the entire $T_\pi$ (see 
\eqref{e6}). Therefore, the above observation that the function $\theta (v)$ vanishes identically on $M$ 
for all $v\, \in\, {\mathfrak a}$ implies that the restriction of $\theta$ to the entire $T_\pi$ vanishes 
identically. But this contradicts the earlier observation that there is some $b\, \in\, B$ such that the 
section of $\Omega^1_{\pi^{-1}(b)}$ obtained by restricting $\theta$ is nonzero. Hence we conclude that the 
section of $\Omega^1_{\pi^{-1}(b)}$ obtained by restricting $\theta$ is nonzero for every $b\, \in\, B$.

Since the section of $\Omega^1_{\pi^{-1}(b)}$ obtained by restricting $\theta$ is nonzero for every
$b\, \in\, B$, we have
\begin{equation}\label{e9}
\dim H^0(M,\, \Omega^1_M) \ \leq\ \dim H^0(\pi^{-1}(b),\, \Omega^1_{\pi^{-1}(b)})\ =\ \dim M - \dim B.
\end{equation}
On the other hand,
\begin{equation}\label{e10}
\dim H^0(M,\, \Omega^1_M) \ = \ \dim H^0(M,\, TM) \ \geq\, \dim M - \dim B
\end{equation}
(see \eqref{e8}). Combining \eqref{e9}, \eqref{e10} and \eqref{e8} we have the following:
$$
\dim H^0(M,\, \Omega^1_M) \ =\ \dim H^0(M,\, TM) \ =\ \dim M - \dim B.
$$
Hence the homomorphism $\Phi$ in \eqref{e2} is an isomorphism between $\mathcal A$ and $T_\pi$.
In particular, the holomorphic vector bundle $T_\pi$ is trivial. 

We now have (as in \eqref{e7})
$$
\det (\pi^*TB)\ =\ \pi^*\det (TB) \ =\ \det (TM)\otimes (\det T_\pi)^* \ =\ {\mathcal O}_M.
$$
Since the fibers of $\pi$ are compacts and  connected, this implies that $\det (TB)\,=\, {\mathcal O}_B$ (see
\eqref{rs} and its proof following \eqref{rs2}). This is in contradiction with statement (ii) in Theorem~\ref{main lemma}, and
hence the proof is complete.

\section{Vaisman Calabi-Yau manifolds}\label{section Vaisman} 

Vaisman manifolds were introduced in \cite{Va1}. This is a class of Hermitian non-K\"ahler manifolds which 
do not satisfy the $\partial \overline{\partial}$-lemma, but still possess a cohomological behaviour close to 
that of K\"ahler manifolds.

Recall that a Hermitian manifold $M$ is called Vaisman if its fundamental $(1,\,1)$--form $\Omega$ is not 
close, but there exists a closed one-form $\theta$, called the {\it Lee form}, satisfying the conditions 
that $d\Omega\,=\, \Omega \wedge \theta$ and $\theta$ is parallel with respect to the Levi-Civita 
connection. The vector field dual to the Lee form --- known as the {\it Lee vector field} --- is real 
holomorphic and Killing \cite{Va2}: it generates a complex one-parameter subgroup in the holomorphic 
automorphism group of the complex manifold $M$ which does not depend on the chosen Vaisman metric (see 
\cite{Ts}). This one-parameter complex group generated by the Lee vector field --- called {\it the Lee 
group} of the manifold --- acts locally freely on $M$: its action defines a holomorphic foliation --- known 
as the {\it canonical foliation} --- which is actually transversally K\"ahler \cite{Va2}. If the Lee group is 
compact, then the Vaisman manifold is called {\it quasi-regular} and in this case the quotient of $M$ by 
the action of the Lee group is in fact a complex orbifold. If, moreover, the action of the compact Lee group is 
free, then $M$ is called a {\it regular} Vaisman manifold, and the quotient of $M$ by the Lee group is 
smooth: therefore, in the regular case, $M$ has the structure of a holomorphic principal elliptic bundle 
over a complex smooth base. For more informations about Vaisman manifolds and more generally about locally conformally K\"ahler manifolds we refer the reader to the book \cite{OV}.

Following Tosatti, \cite{To}, compact complex manifolds with vanishing first Chern class in 
Bott-Chern cohomology are called Calabi-Yau.

Vaisman Calabi-Yau manifolds have been studied by Istrati in \cite{Is}. By Theorem \cite[Theorem C]{Is}, the 
connected component of the automorphism group of any Vaisman Calabi-Yau manifold is compact one dimensional and it
coincides with the Lee group. Therefore, any Vaisman Calabi-Yau manifold is quasi-regular. Moreover, a 
Beauville-Bogomolov type decomposition Theorem for Vaisman Calabi-Yau manifolds is proved in \cite[Theorem 
D]{Is}. More precisely, \cite[Theorem D]{Is} shows that any Vaisman Calabi-Yau manifold $M$ admits a finite 
unramified cover which is a principal elliptic orbi-bundle over an orbifold Y which is the product of an 
abelian variety and a simply connected projective orbifold with trivial canonical class. This implies, in 
particular, that the canonical bundle of a Vaisman Calabi-Yau manifold is of finite order.
 
Now we make use of the above results of Istrati \cite{Is} to prove Theorem \ref{Vaisman}.

\subsection {Proof of Theorem \ref{Vaisman}}\label{seva}

Assume that the Vaisman Calabi-Yau manifold $M$ bears a holomorphic geometric structure of affine type 
$\Psi$. We pull-back the holomorphic geometric structure on the finite unramified cover given by the 
decomposition theorem \cite[Theorem D]{Is}. Therefore, we assume --- without any loss of generality --- 
that $M$ has a holomorphically trivial canonical line bundle and $M$ has the structure of a principal 
elliptic orbi-bundle
\begin{equation}\label{dpi}
\pi\ :\ M \ \longrightarrow\ Y
\end{equation}
over the projective pure orbifold $Y$ with trivial canonical class.

Denote by $T$ the elliptic curve which is the structure group of the above elliptic orbi-bundle, and also 
denote by $X$ the fundamental holomorphic vector field on $M$ defined by the holomorphic $T$-action on $M$. 
The holomorphic vector field $X$ does not vanish (since the $T$-action is locally free) and it spans the 
kernel of $d \pi$ (see \eqref{dpi}), defining the vertical subbundle $V \,\subset\, TM$ for the projection 
$\pi$.

To prove by contradiction, assume that the holomorphic geometric structure of affine type $\Psi$ is not 
locally homogeneous on $M$. Then \cite[Lemma 3.2]{Du} proves that there exist two integers $a,\,b \,\geq\, 0$ (at 
least one of them being positive) and a nontrivial holomorphic section of the holomorphic vector bundle 
$(TM)^{\otimes a}\otimes (T^*M)^{\otimes b}$ which vanishes at some point in $M$.

Observe that all holomorphic sections of $(TM)^{\otimes a} \otimes (T^*M)^{\otimes b}$ are automatically 
$T$-invariant. Indeed, to any holomorphic section $\psi$ of this vector bundle and any $t \,\in\, T$ one 
associates the section ${t^*} \psi$, given by the linear $T$-action on the vector space ${\rm H}^0(M,\, 
(TM)^{\otimes a} \otimes (T^*M)^{\otimes b})$ canonically induced by the $T$-action on $M$. Therefore, we 
get a complex Lie group homomorphism $T \,\longrightarrow \, {\rm GL}({\rm H}^0(M,\, (TM)^{\otimes a} 
\otimes (T^*M)^{\otimes b})$. But any such homomorphism from a compact complex torus to a complex affine 
group must be the trivial homomorphism. This implies that $t^*\psi\,=\,\psi$, for all $t \,\in\, T$ and all 
$\psi\,\in\, {\rm H}^0(M,\, (TM)^{\otimes a}\otimes(T^*M)^{\otimes b})$. Consequently, every holomorphic 
tensor on $M$ must be $T$-invariant.

The canonical line bundle $K_M$ being trivial, there is a canonical contraction isomorphism between $TM$ 
and $\bigwedge^{n-1} (T^*M)$. Through this isomorphism, the nontrivial holomorphic section of the 
holomorphic vector bundle $(TM)^{\otimes a}\otimes (T^*M)^{\otimes b}$ --- constructed above --- provides a 
nontrivial holomorphic section $\phi$ of $(T^*M)^{\otimes m}$, with $m\,=\,(n-1)a+b \,\geq\, 1$, such that 
$\phi$ vanishes at some point $m_0 \,\in\, M$. Recall that this section $\phi$ was proved to be 
$T$-invariant.

We will get a contradiction using an induction on the above integer $m \,\geq\, 1$.

First consider the case where $m\,=\,1$. In this case $\phi$ is a nontrivial holomorphic one-form on $M$ 
vanishing at $m_0 \,\in\, M$. Note that $\phi(X)$, where $X$ as before is the fundamental vector
field on $M$ defined by the holomorphic $T$-action on $M$, is a holomorphic function on $M$ vanishing at $m_0$, and 
therefore it vanishes identically on $M$. This proves that the vertical space $V$ (for the projection $\pi$ 
in \eqref{dpi}) is in the kernel of $\phi$. The fibers of $\pi$ being compact and connected we can project 
$\phi$ on $Y$ using $d \pi$. This constructs a nontrivial holomorphic one-form $\phi'$ on $Y$ which 
vanishes at $\pi(m_0) \,\in \,Y$ and satisfies the condition that ${\pi^*}\phi' \, =\,\phi$. Notice that at 
the points in $M$ having a nontrivial (finite) stabilizer ${\rm Stab}(m)$ in $T$, the holomorphic tensor 
$\phi$ being $T$-invariant is, in particular, ${\rm Stab}(m)$--invariant. Consequently, $\phi$ descends as a 
holomorphic tensor $\phi'$ on $Y$ which is well-defined on the orbifold $Y$.

Since $Y$ is a projective Calabi-Yau orbifold, it admits a Ricci flat orbifold K\"ahler metric; see 
\cite[Th\'eor\`eme 4.1]{Ca} for an extension of Yau's Theorem, \cite{Ya}, to the orbifold case. Holomorphic 
tensors on $Y$ are parallel with respect to any Ricci flat orbifold K\"ahler metric on it (see 
\cite[Theorem 8.2]{GGK} and \cite[Theorem A]{CGGN}). Hence the tensor $\phi'$ cannot vanish at the point 
$\pi(m_0)$ without being trivial. For a detailed explanation of this, notice that in the case where
${\rm Stab}(m_0)$ 
in $T$ is a nontrivial (finite) subgroup in $T$, a neighborhood of $\pi(m_0)$ in $Y$ is isomorphic to a 
quotient of a ball $U \,\subset\, \mathbb C^n$ centered in the origin by the action of the finite group ${\rm 
Stab}(m_0)$ fixing the origin. The holomorphic tensor $\phi$ pulls back as a ${\rm Stab}(m_0)$--invariant 
holomorphic tensor on $U$ which is parallel with respect to the pull-back, on $U$, of the K\"ahler Ricci
flat metric (which is ${\rm Stab}(m_0)$--invariant as well). Therefore, $\phi$ cannot vanish at $m_0$
without being identically zero.

So we get a contradiction when $m\,=\,1$.

Now consider the case where $m\,>\,1$. The induction hypothesis is that a nontrivial holomorphic section of 
$(T^*M)^{\otimes k}$ does not vanish if $0\,<\,k\,<\,m$.

It will be shown that for all $0 \,<\, r \,\leq\, m$, any contraction of $\phi$ with $r$-copies of the
vector field $X$ vanishes identically on $M$.

First consider the case of $r\,=\,m$. When contracted with $m$-copies of $X$, the tensor $\phi$ produces 
the holomorphic function $\phi(X,\,X,\, \cdots,\,X)$ on $M$ which vanishes at $m_0$. Therefore the function 
$\phi(X,\,X,\, \cdots,\,X)$ vanishes identically on $M$.

Now assume that $0\,<\,r\,<\,m$. When contracted with any $r$-copies of $X$, the tensor $\phi$
produces a holomorphic section of $(T^*M)^{\otimes m-r}$ vanishing at the point $m_0$. Since $0\, < m-r\, <\,
m$, the induction hypothesis holds for $m-r$. Consequently, this section of $(T^*M)^{\otimes m-r}$ vanishes
identically on $M$.

Since all the above contractions vanish identically, it follows that our tensor $\phi$ is a holomorphic 
section of $((TM/V)^*)^{\otimes m}$. Moreover, we proved the $T$-invariance of $\phi$.

The fibers of $\pi$ being compact and connected, we can project the previous tensor on $Y$ using the 
differential $d \pi$. This produces a nontrivial holomorphic section of $((TY)^*)^{\otimes m}$ which 
vanishes at the point $\pi(m_0) \,\in\, Y$. But, as before, this holomorphic tensor should be parallel with 
respect to any K\"ahler Ricci flat metric on the projective orbifold $Y$ (see \cite[Theorem 8.2]{GGK} and 
\cite[Theorem A]{CGGN}): a contradiction. This completes the proof of the local homogeneity property of the 
holomorphic geometric structure $\Psi$ of affine type.

Let us prove that the geometric structure $\Psi$ is $T$-invariant. The proof is similar to the one 
already given for tensors.

Assume that $\Psi$ is defined as a holomorphic $D^k$-equivariant map $\phi\, :\, R^k(M) \,\longrightarrow\, 
Z$, with $Z$ being a complex affine variety endowed with an algebraic $D^k$-action (see Definition 
\ref{def}).

A classical result of algebraic group representations furnishes a $D^k$--equivariant algebraic embedding 
of $Z$ into a complex vector space $W$ endowed with a $D^k$--algebraic linear action (see, for instance, 
\cite{PV}). This identifies $\Psi$ with a holomorphic section of the holomorphic vector bundle $W_M$ over 
$M$ with fiber type $W$ associated to the principal bundle $R^k(M)$ via the $D^k$--algebraic linear action 
on $W$. As in the above proof for holomorphic tensors (corresponding to the case $k\,=\,1$), observe that 
any complex Lie group homomorphism $T\,\longrightarrow\, {\rm GL} (H^0(M,\, W_M))$ is trivial, and 
therefore ${t^*}(\Psi)\,=\,\Psi$, for any $t \,\in\, T$ (here the $T$--action on $H^0(M,\, W_M)$ is 
inherited by the canonical lift of the $T$--action on the $k$--frame bundle $R^k(M)$). Consequently, $T$ 
acts via automorphisms for the holomorphic geometric structure $\Psi$. In other words, $\Psi$ is 
$T$--invariant. This completes the proof of statement (1) in Theorem \ref{Vaisman}.

In order to prove the statement (2) in Theorem \ref{Vaisman}, consider the sheaf of local Killing vector fields 
for $\Psi$. Since $\Psi$ is locally homogeneous this sheaf is transitive on $M$. We proved that the fundamental
vector field $X$ is a Killing vector field and therefore it is a global section of the sheaf of local
Killing vector fields.

Since $X$ is a global holomorphic vector field defined on $M$, the holomorphic geometric structure 
$(\Psi,\, X)$ obtained by the juxtaposition of $\Psi$ and $X$ is also a holomorphic geometric structure on 
$M$ \cite{Gr,DG, Ben}. Moreover, if $\Psi$ is rigid, then $(\Psi,\, X)$ is also rigid.

The holomorphic geometric structure $(\Psi,\,X)$ is locally homogeneous by statement (i) in Theorem 
\ref{Vaisman} applied to $(\Psi, X)$. This means that the sheaf of local Killing vector fields for $(\Psi, 
\,X)$ is transitive on $M$ and $X$ is a global section of it. Observe that this sheaf is the centralizer of 
$X$ in the sheaf of local Killing vector fields for $\Psi$ (in other words, the sheaf of local Killing 
vector fields for $\Psi$ that commute with $X$). Denote this sheaf of vector fields by ${\rm Kill}_X$ and 
consider its direct image $\nu$ through the fibration $\pi\,:\, M\,\longrightarrow\, Y$. Since the local 
sections of ${\rm Kill}_X$ commute with $X$, the sheaf ${\rm Kill}_X$ is actually $T$-invariant. Therefore, 
for any open set $U \subset M$ and any local section $Z \in {\rm Kill}_X(U)$ the pointwise projection 
$d\pi(Z)$ through the fibration $\pi$ defines a holomorphic section over $\pi(U)$ of the holomorphic tangent 
bundle $TY$. Consequently, the image of the pointwise projection through $\pi$ of the sheaf ${\rm Kill}_X$ 
defines a subsheaf $\nu$ of the holomorphic tangent sheaf $TY$. Moreover, since ${\rm Kill}_X$ is transitive 
on $M$, the subsheaf $\nu$ of the holomorphic tangent sheaf $TY$ is transitive on $Y$.

Let us now prove --- under the assumption that $\Psi$ is rigid --- that the local sections of $\nu$ 
uniquely extend to global sections of $\nu$ over all simply connected open subsets of $Y$. For this 
purpose, we prove the following extension property for the subsheaf $\nu$: Every point $y$ in $Y$ has an open 
neighborhood $V_y$ in $Y$ such that, for any connected open subset $W \,\subset\, V_y$, any local section of 
$\nu$ over $W$ uniquely extends to a local section of $\nu$ over $V_y$.

First consider a regular point $y \,\in\, Y$. Chose a simply connected open subset $V_y\, \subset\, Y$ such 
that $y \,\in\, V_y$ and $\pi^{-1}(V_y)\,=\, V_y \times T$. The fundamental group of $\pi^{-1}(V_y)$ is 
generated by the fundamental group of the fiber $T$. More precisely, if the elliptic curve $T$ is 
uniformized as a quotient of $\mathbb C$ by a normal lattice $\Gamma$ in $\mathbb C$, then the universal 
cover of $\pi^{-1}(V_y)$ is biholomorphic to $V_y \times \mathbb C$ and $\pi^{-1}(V_y)\,=\, V_y \times T$ is 
the quotient of $V_y \times \mathbb C$ by the fundamental group identified with $\Gamma$ acting trivially on 
the first factor $V_y$ and by translations on the second factor $\mathbb C$. By the extendibility result of 
local Killing vector fields for the holomorphic rigid geometric structure $(\Psi,\,X)$ (see \cite{Am, 
Gr,DG,No}), the pull-back of the sheaf ${\rm Kill}_X$ on the universal cover $V_y \times \mathbb C$ of 
$\pi^{-1}(V_y)$ is generated by its global sections, i.e., any local section of the sheaf actually uniquely 
extends to a global holomorphic vector field on $V_y \times \mathbb C$ whose flow preserves the pull-back of 
$(\Psi,\, X)$. Moreover, these global sections commute with the action of $\mathbb C$ by translations on the 
fibers (generated by the pull-back of $X$). In particular, the global sections of the pull-back of ${\rm 
Kill}_X$ on $V_y \times \mathbb C$ are $\Gamma$--invariant; consequently, they descend as global sections of 
${\rm Kill}_X$ over $\pi^{-1}(V_y)\,=\, V_y \times T$. In other words, the local sections of the sheaf ${\rm 
Kill}_X$ restricted to $\pi^{-1}(W)$ uniquely extend as holomorphic Killing vector fields on entire 
$\pi^{-1}(V_y)$. Consequently, the restriction of the above defined subsheaf $\nu$ of the tangent sheaf $TY$ 
to $V_y$ (recall that $\nu$ is the image of the pointwise projection through $\pi$ of the sheaf ${\rm 
Kill}_X$) shares the property that all its local sections defined on the connected open subset $W \subset 
V_y$ uniquely extend to local sections defined on $V_y$.

Next consider a point $y\,\in\, Y$ such that the points in the fiber $\pi^{-1} (y)$ have a nontrivial 
(finite) stabilizer ${\rm Stab(y)}\, \subset\, T$ (since $T$ is abelian, points of $\pi^{-1} (y)$ have a 
common stabilizer). In this case $y$ admits a simply connected open neighborhood $V_y$ in $Y$ such that 
$\pi^{-1}(V_y)= (V_y \times T)/{\rm Stab(y)},$ with the cyclic group ${\rm Stab(y)}$ acting by rational 
rotations on $V_y$ (identified with a ball) and it acts by translations on the fiber $T$. As before, the 
sheaf obtained by the pull-back of ${\rm Kill}_X$ (restricted to $\pi^{-1}(V_y)$) to $V_y \times T$ is 
generated by its global sections. Since those global sections are $T$--invariant (they commute with the 
pull-back of $X$), they are in particular ${\rm Stab(y)}$--invariant. Therefore, the sheaf ${\rm Kill}_X$ 
restricted to $\pi^{-1}(V_y)\,=\, (V_y \times T)/{\rm Stab(y)}$ is generated by its global sections. We 
conclude as before that its pointwise projection $\nu$ through $\pi$ is generated by its global sections on 
$V_y$.

This implies that the sheaf $\nu$ on $Y$ has the extendibility property: Its sections extend by analytic 
continuation along any path in $Y$. Moreover, by the monodromy principle, this extension only depends on 
the homotopy type (with fixed endpoints) of the chosen path. In particular, any local section of $\nu$ 
defined on an open subset $V_y$ in $Y$ extends as a global holomorphic vector field on any connected simply 
connected open subset containing $V_y$.

Recall now that $Y$ is the product of an abelian variety with a simply connected projective pure orbifold 
with trivial canonical class. Assume, by contradiction, that the simply connected projective pure orbifold 
factor has positive dimension and denote this orbifold by $Y_1$. Consider a simply connected open set $U$ of the 
abelian variety, and consider the restriction $\nu_1$ of the subsheaf $\nu$ to the simply connected open subset 
$U \times Y_1 \,\subset\, Y$. Recall that $\nu_1$ is transitive on $U \times Y_1$ and that the extendability 
property of $\nu_1$ implies that $\nu_1$ is spanned by its global sections.

Considering the canonical projection $T(U \times Y_1)={\pi_1}^{*}(T_U) \oplus {\pi_2}^{*}(TY_1) 
\longrightarrow {\pi_2}^{*} TY_1$ restricted to $\{u \} \times Y_1$ for some $u \in U$ (with $\pi_1$ 
(respectively, $\pi_2$) being the projection on to the first (respectively, second) factors), this implies 
that the holomorphic tangent bundle of the submanifold $\{u \} \times Y_1$ is globally generated. But we 
have seen that, by Bochner principle, all holomorphic vector fields on $Y_1$ are invariant by the parallel 
transport of a Ricci flat orbifold K\"ahler metric on it (see \cite[Theorem 8.2]{GGK} and \cite[Theorem 
A]{CGGN}). Hence $TY_1$ admits a flat holomorphic trivialization which implies $Y_1$ is an abelian variety: 
a contradiction.

This completes the proof of Theorem \ref{Vaisman}.

Notice that in the proof of the statement (1) of Theorem \ref{Vaisman} the following weak 
version of Bochner principle is proved:

\begin{theorem}\label{Bochner}
Let $M$ be a compact Vaisman Calabi-Yau manifold. Then any nontrivial holomorphic tensor on $M$ has empty 
vanishing locus.
\end{theorem}

Another by-product of the proof of the statement (1) of Theorem \ref{Vaisman} is the following.

\begin{theorem}\label{orbibundle}
Let $M$ be a compact complex manifold, with trivial canonical line bundle, which is a principal torus
orbibundle over a K\"ahler pure orbifold with trivial canonical class. Then any holomorphic geometric structure 
of affine type on $M$ is locally homogeneous and invariant by the principal fibration.
\end{theorem}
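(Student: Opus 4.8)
The plan is to reproduce the argument already used for statement (1) of Theorem \ref{Vaisman}, isolating the only two places where that proof relied on more than the present hypotheses: there the fibre was one-dimensional (an elliptic curve) and the base was projective, whereas here the fibre is a complex torus $T$ of arbitrary dimension $d$ and the base $Y$ is merely a K\"ahler Calabi-Yau orbifold. First I would fix the principal torus orbi-bundle $\pi \colon M \to Y$ and choose a basis $X_1, \ldots, X_d$ of the abelian Lie algebra $\mathfrak t$ of fundamental holomorphic vector fields of the $T$-action. Because the $T$-action is locally free, these fields have no common zero and span, at each point, the vertical subbundle $V = \ker d\pi \subset TM$; this family plays the role that the single vector field $X$ played in the elliptic case.

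The $T$-invariance of holomorphic tensors transfers unchanged: $T$ acts linearly on each finite-dimensional space $H^0(M,\, (TM)^{\otimes a}\otimes (T^*M)^{\otimes b})$, and any holomorphic homomorphism from a compact complex torus into a complex affine algebraic group is trivial, so every such section is $T$-invariant regardless of $\dim T$. Assuming $\Psi$ is not locally homogeneous, \cite[Lemma 3.2]{Du} produces a nontrivial holomorphic section of some $(TM)^{\otimes a}\otimes (T^*M)^{\otimes b}$ vanishing at a point $m_0$, and the contraction isomorphism $TM \cong \bigwedge^{n-1}(T^*M)$ coming from the trivialization of $K_M$ turns it into a nontrivial $T$-invariant section $\phi$ of $(T^*M)^{\otimes m}$, with $m = (n-1)a + b \geq 1$, still vanishing at $m_0$.

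I then induct on $m$ exactly as before. Contracting $\phi$ in one slot with any $X_i$ produces a $T$-invariant holomorphic section of $(T^*M)^{\otimes(m-1)}$ vanishing at $m_0$, which vanishes identically by the inductive hypothesis (for $m=1$ this contraction is a holomorphic function vanishing at $m_0$, hence the zero constant). As $X_1, \ldots, X_d$ span $V$ pointwise, $\phi$ annihilates $V$ and is therefore a section of $((TM/V)^*)^{\otimes m} = (\pi^*T^*Y)^{\otimes m}$; being $T$-invariant and having compact connected fibres, it descends, through the orbifold charts and their finite stabilizers as in the proof of Theorem \ref{Vaisman}, to a nontrivial holomorphic section $\phi'$ of $(T^*Y)^{\otimes m}$ vanishing at $\pi(m_0)$. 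Since $Y$ is a compact K\"ahler Calabi-Yau orbifold, the orbifold form of Yau's theorem provides a Ricci-flat orbifold K\"ahler metric for which $\phi'$ is parallel (\cite[Th\'eor\`eme 4.1]{Ca}, \cite[Theorem 8.2]{GGK}, \cite[Theorem A]{CGGN}), and a nonzero parallel tensor cannot vanish anywhere: a contradiction. Local homogeneity follows, and the $T$-invariance of $\Psi$ itself is obtained, as in Theorem \ref{Vaisman}, by embedding the target $Z$ $D^k$-equivariantly into a vector space $W$, viewing $\Psi$ as a section of the associated bundle $W_M$, and applying the same triviality of torus homomorphisms to $H^0(M,\, W_M)$.

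The step I expect to require the most care is the concluding orbifold Bochner principle, precisely because it is the one input where the hypotheses differ from Theorem \ref{Vaisman}: there the base was projective and \cite{Ca} applied directly, whereas here one must ensure that the orbifold Yau theorem and the parallelism of holomorphic tensors remain valid for a compact K\"ahler orbifold with trivial canonical class that need not be projective. The remainder of the argument is conceptually identical to the elliptic case; the passage from a single vertical vector field to the spanning family $X_1, \ldots, X_d$ is purely formal.
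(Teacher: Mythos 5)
Your proposal is correct and takes essentially the same approach as the paper, whose entire proof of this theorem is the observation that the argument for Theorem \ref{Vaisman}(1) generalizes verbatim once the single fundamental vector field $X$ is replaced by a spanning family $X_1,\dots,X_d$ of fundamental fields of the torus action --- exactly the reduction you carry out. The one point you flag as delicate, namely that the orbifold Yau theorem and Bochner principle must hold for a K\"ahler (not necessarily projective) base, is already covered by the references the paper invokes: \cite[Th\'eor\`eme 4.1]{Ca} and \cite[Theorem A]{CGGN} are stated for K\"ahler orbifolds and K\"ahler spaces, respectively.
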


Indeed, the proof of Theorem \ref{Vaisman}(1) directly generalizes from the case of a holomorphic principal 
$T$--orbi-bundle, with $T$ an elliptic curve, to the case where $T$ is a compact complex torus. The above 
statement extends to orbi-bundles a result obtained earlier in \cite[Theorem 1.2]{BD2}.

\begin{remark}\label{diagonal Hopf}
Diagonal Hopf manifolds $M_D$ are defined as quotients of ${\mathbb C}^n \setminus \{0 \}$ by a diagonal 
linear contraction matrix $D$. Hopf manifolds possess a natural complex affine structure 
(given by a torsionfree flat affine connection $\nabla_0$) which is constructed
as the descend of the standard affine 
structure of the universal cover ${\mathbb C}^n \setminus \{0 \}$. Moreover the commuting holomorphic 
vector fields $\displaystyle z_i \frac{\partial}{\partial z_i}$, $i \,\in\, \{1,\, \cdots ,\, n\}$, descend 
from ${\mathbb C}^n \setminus \{0 \}$ to the quotient $M_D$ as commuting holomorphic vector fields $X_i$ on 
$M_D$ which are linearly independent outside the divisor $S$ given by the projection of the coordinate 
axis. The holomorphic geometric structure $\Psi$ given by the juxtaposition $(\nabla_0,\, X_i)$ of the 
complex affine structure and the family of commuting holomorphic vector fields $X_i$, $i \,\in\, \{1,\, 
\cdots ,\, n\}$, is a rigid holomorphic geometric structure of affine type on $M$ \cite{DG, Gr} which is 
locally homogeneous on $M\setminus S$. Indeed, the complex abelian Lie group $A$ generated by the 
holomorphic vector fields $X_i$ acts transitively on $M \setminus S$ preserving $\Psi$: $A$ coincides with 
the connected component of the identity element in the automorphism group ${\rm Aut}(M,\, \Psi)$. Moreover, $\Psi$ is 
not locally homogeneous on entire $M$ since the holomorphic section $X_1 \wedge \ldots \wedge X_n$ of the 
canonical line bundle $\bigwedge^n {T^*}M_D$ (which is invariant by all local automorphisms of $\Psi$) 
vanishes on $S$. Therefore, $M_D$ admits non-locally homogeneous rigid holomorphic geometric structures of 
affine type and admit nontrivial holomorphic tensors with non-empty vanishing locus. On the other hand diagonal Hopf manifolds are known to admit Vaisman metrics 
\cite{Bel}. Therefore, Theorem \ref{Vaisman} and Theorem \ref{Bochner} do not hold for all Vaisman 
manifolds.
\end{remark} 

\section{Principal torus bundles over K\"ahler Calabi-Yau manifolds}\label{forms}

Recall that compact complex simply connected threefolds with trivial canonical bundle do not admit {\it 
rigid} holomorphic geometric structures \cite[Theorem D]{BCDG}. The following result exhibits (non-closed) 
holomorphic one-forms on some compact simply connected threefolds with trivial canonical bundle which are 
holomorphic elliptic bundles over $K3$ surfaces. It is inspired by a construction of Brunella in \cite[page 
648]{Br}.

\begin{theorem}\label{simply connected with forms}
There exists a simply connected principal elliptic bundle $M$ over a projective $K3$ surface $B$ admitting 
a (non closed) nonsingular holomorphic one-form $\omega$.
\end{theorem}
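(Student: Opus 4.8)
The plan is to construct $M$ as a principal elliptic bundle over a projective $K3$ surface $B$ in such a way that the total space is simply connected while the bundle itself is topologically nontrivial, and then to produce the one-form $\omega$ as a suitable connection-type form on the elliptic fibers. The key point is that a principal $T$-bundle $\pi\colon M \longrightarrow B$ (with $T$ an elliptic curve) is classified by an element of $H^1(B,\, \mathcal{O}_T^M)$, which via the exponential sequence splits into a ``discrete'' part in $H^2(B,\, \Lambda)$ (where $\Lambda$ is the period lattice of $T$) measuring the topological type, and a ``continuous'' part. To kill $\pi_1(M)$ we want the topological part to be given by classes in $H^2(B,\, \mathbb{Z})$ that generate a finite-index subgroup upon pairing against the lattice $\Lambda \cong \mathbb{Z}^2$; since $B$ is simply connected with $H^1(B,\,\mathbb{Z}) = 0$, the Gysin/homotopy exact sequence for the $S^1 \times S^1$-bundle then forces $\pi_1(M)$ to be trivial provided the Euler classes span a finite-index sublattice of $H^2(B,\,\mathbb{Z})$ in the relevant sense.

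First I would fix two classes $c_1,\, c_2 \in H^2(B,\,\mathbb{Z})$ that are of type $(1,1)$ and linearly independent over $\mathbb{Q}$, which exist in abundance on a projective $K3$ surface since its Picard number can be taken positive (indeed one may choose $B$ with Picard number $\geq 2$). These will serve as the two ``integral'' Chern classes determining how the two circle-directions of the elliptic fiber $T = \mathbb{C}/\Lambda$ are twisted over $B$. I would then build $M$ as the fiber product, or equivalently as the quotient construction, realizing a principal $T$-bundle whose characteristic class in $H^2(B,\,\Lambda) \cong H^2(B,\,\mathbb{Z}) \otimes \Lambda$ is $c_1 \otimes \lambda_1 + c_2 \otimes \lambda_2$, where $\lambda_1,\,\lambda_2$ is a basis of $\Lambda$. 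The homotopy exact sequence of the torus bundle, together with $\pi_1(B) = 0$ and the fact that the connecting map $\pi_2(B) \longrightarrow \pi_1(T) = \Lambda$ is surjective (this is exactly the statement that $c_1,\,c_2$ generate a finite-index subgroup, using $\pi_2(B) \cong H_2(B,\,\mathbb{Z})$ and the intersection pairing on the $K3$ surface), yields $\pi_1(M) = 0$. Trivial canonical bundle of $M$ follows because $K_M = \pi^*K_B \otimes K_{M/B}$, and $K_B$ is trivial for a $K3$ surface while the relative canonical bundle of an elliptic (hence torus) bundle is trivial.

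Next I would produce the holomorphic one-form. On the universal cover, or locally on trivializing opens $U_\alpha$ of $B$, the fiber coordinate gives a local holomorphic one-form $dz$ along the fibers; the obstruction to gluing these into a global holomorphic one-form $\omega$ on $M$ is precisely measured by the continuous part of the bundle's classifying class. By choosing the bundle data so that one of the two period directions is ``integrable'' in this sense --- concretely, arranging a holomorphic connection $(1,0)$-form on the $T$-bundle whose curvature lands in a single rational $(1,1)$-class --- I would obtain a global holomorphic one-form $\omega$ that restricts to a nonzero translation-invariant form on each fiber, hence is nowhere vanishing. Its being non-closed is exactly the statement that the curvature $d\omega = \pi^*(\text{a nonzero }(1,1)\text{-class})$ does not vanish, which I would guarantee by taking that Chern class to be nonzero; this is also consistent with $M$ being non-K\"ahler, since on a K\"ahler manifold every holomorphic one-form is closed.

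The main obstacle I anticipate is the simultaneous bookkeeping of the two constraints on the classifying class: the topological part must be large enough (finite-index lattice generation) to force simple connectivity, while the structure must still permit a globally-defined \emph{holomorphic} (as opposed to merely smooth) nowhere-vanishing one-form that is \emph{non-closed}. Reconciling these amounts to a careful choice inside $H^1(B,\,\mathcal{O}_T^M)$ via the exponential sequence $0 \to \Lambda \to \mathcal{O}_B \to \mathcal{O}_T^M \to 0$, separating the purely topological data (which I want nontrivial and of finite index) from the Hodge-theoretic data that controls whether $\omega$ descends holomorphically and whether it is closed. I expect the cleanest route is to follow Brunella's construction on \cite[page 648]{Br} adapted to the $K3$ base: take $B$ with two independent integral $(1,1)$-classes, build the bundle from a cocycle representing $c_1\otimes\lambda_1 + c_2\otimes\lambda_2$, and then verify directly that the tautological fiber form glues holomorphically while its curvature is a nonzero multiple of one of the $c_i$, giving both non-closedness and nonvanishing at once.
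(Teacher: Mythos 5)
Your construction fails at the decisive step, and the failure is a Hodge--type error, not a bookkeeping one. If $\omega$ is a \emph{holomorphic} one-form on a complex manifold $M$, then $\bar\partial\omega\,=\,0$, so $d\omega\,=\,\partial\omega$ is a holomorphic form of type $(2,0)$. It can therefore never equal the pull-back of a nonzero $(1,1)$-class, which is exactly what you prescribe when you ask for a ``holomorphic connection $(1,0)$-form whose curvature lands in a single rational $(1,1)$-class'' and then set $d\omega\,=\,\pi^*(\hbox{nonzero }(1,1)\hbox{-class})$: a form of type both $(2,0)$ and $(1,1)$ vanishes, so your $\omega$ would be closed. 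This rules out your whole choice of twisting data, not just the last step. Concretely, if the characteristic class of the principal $T$-bundle is $c\,=\,c_1\otimes\lambda_1+c_2\otimes\lambda_2$ with $c_1,\,c_2$ nonzero integral $(1,1)$-classes, the obstruction to gluing the fiber forms $dz_i$ (corrected by base forms) into a global holomorphic one-form is precisely the image of $\lambda_1c_1+\lambda_2c_2$ in $H^1(B,\,\Omega^1_B)\,=\,H^{1,1}(B)$ --- the Atiyah class of the bundle --- and this is nonzero as soon as $c_1,\,c_2$ are $\mathbb{Q}$-independent. Since also $H^0(B,\,\Omega^1_B)\,=\,0$ for a $K3$ surface, the exact sequence $0\,\to\,\pi^*\Omega^1_B\,\to\,\Omega^1_M\,\to\,\mathcal{O}_M\,\to\,0$ together with the injectivity of $H^1(B,\,\Omega^1_B)\,\to\,H^1(M,\,\pi^*\Omega^1_B)$ (Leray) shows that your manifold $M$ carries \emph{no} nonzero holomorphic one-forms at all, closed or not. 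So your bundle is indeed a simply connected principal elliptic bundle over a projective $K3$, but it cannot satisfy the conclusion of the theorem.

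The missing idea --- and the heart of the paper's proof --- is that the twisting class must be of type $(2,0)$, not $(1,1)$: one takes the characteristic class in $H^2(B,\,\Lambda)$ whose image in $H^2(B,\,\mathbb{C})$ is the class of the holomorphic symplectic form $\Omega$ of the $K3$ surface. Then the $(1,1)$-obstruction above vanishes, the local primitives $\omega_i$ (with $d\omega_i\,=\,\Omega$) glue with $dz$ to a global holomorphic one-form $\omega$ satisfying $d\omega\,=\,\pi^*\Omega$, which is of type $(2,0)$, nowhere zero along the fibers, and nonzero --- consistent with holomorphicity. For the gluing cocycle $F_{ij}+F_{jk}+F_{ki}$ to take values in a lattice $\Lambda\,\subset\,\mathbb{C}$ (so that a $\mathbb{C}/\Lambda$-bundle can be formed at all), the group of periods of $\Omega$ must be a lattice in $\mathbb{C}$; this is a highly non-generic arithmetic condition, which the paper arranges by taking $B$ to be a Kummer $K3$, where the periods lie in $\frac{1}{4}\mathbb{Z}[\sqrt{-1}]$. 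Your hypothesis of Picard number at least $2$ plays no role in this mechanism. A secondary point: in the part of your argument that does work (simple connectivity), finite index of the image of $\pi_2(B)\,\to\,\Lambda$ is not enough --- that would only make $\pi_1(M)$ finite; one needs surjectivity, i.e., in H\"ofer's terms \cite{Ho} the two integral classes must form a basis of a direct summand of $H^2(B,\,\mathbb{Z})$, which is what the paper verifies.
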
 

\begin{proof}
Let us first construct a projective $K3$-surface $B$ bearing a holomorphic two-form $\Omega$ such that the
group of periods of $\Omega$ is a lattice $\Lambda$ in $\mathbb C$.

One way to obtain such a pair $(B, \,\Omega)$ is to consider a Kummer surface. Indeed, let $\Gamma$ be the 
standard lattice in $\mathbb{C}^2$ generated by $(1,\,0),\, (\sqrt{-1},\,0),\, (0,\,1),\, (0,\,\sqrt{-1})$. 
Consider the quotient of the abelian variety $Y\,=\,\mathbb {C}^2 / \Gamma$ by the holomorphic involution 
$i\,:\, \mathbb {C}^2 / \Gamma\, \longrightarrow\, \mathbb {C}^2 / \Gamma$ defined by $(x, \,y) 
\,\longmapsto\, (-x,\, -y)$.

The above involution $i$ admits 16 fixed points which are the order 2 points in $Y$. Let $B$ be the 
blow-up of $Y/i $ at these 16 points. The smooth complex projective surface $B$ is a K3 surface (in 
particular, it is simply connected). A holomorphic 2-form $\Omega$ on B is given by the 
pull-back of the translation volume form $dz_1\wedge dz_2$ on $Y$, which is also invariant under the 
involution $i$.

The homology group $H_2(B,\, \mathbb{Z})$ is generated by the 16 copies of ${\mathbb C}P^1$ (the 
exceptional divisors for the above mentioned blow-up) and by the images in $H_2(B,\, \mathbb{Z})$ of the 6 
standard generators of $H_2(Y, \,\mathbb{Z}).$ The integral of $\Omega$ on each of the exceptional divisors is zero 
because the form $\Omega$ is pulled back from $Y/i$ and it hence vanishes on the exceptional divisors. On the 
generators of $H_2(Y,\, \mathbb{Z})$ the integral of $\Omega$ is one of the numbers $0,\, 1/4,\, -1/4,\, 
\sqrt{-1}/4,\, -\sqrt{-1}/4.$ Consequently, the group of periods of $\Omega$ form a lattice $\Lambda$ in 
$\mathbb C$.

Now consider the above pair $(B,\, \Omega)$, with $B$ being the projective K3 surface and $\Omega$ the 
(nonsingular) holomorphic two-form whose group of periods is a lattice $\Lambda$ in $\mathbb C$. The 
corresponding elliptic curve $\mathbb{C}/\Lambda$ will be denoted by $T$.

Let us choose an open cover $\{U_i\}_{i \in I}$ of $B$, by open subsets in the analytic topology, such that 
all $U_i$ and all connected components of $U_i \cap U_j$ are contractible. Then on each open subset $U_i$ 
there exists a holomorphic one-form $\omega_i$ such that $\Omega_i\,=\,d \omega_i$. On nontrivial 
intersections $U_i \cap U_j$, we get that $\omega_i-\omega_j\,=\,dF_{ij}$, with $F_{ij}$ a holomorphic 
function defined on $U_i \cap U_j$.

On intersections $U_i \cap U_j \cap U_k$, the sum $F_{ij} +F_{jk}+F_{ki}$ is a locally constant $2$-cocycle 
that represents the cohomology class in $H^2(B,\, { \mathbb C})$ of the closed two-form $\Omega$. 
Therefore, we can choose the local one-forms $\omega_{i}$ and the associated holomorphic functions $F_{ij}$ 
such that --- for every triple $(i,\, j,\, k)$ --- the value of the $2$-cocycle $F_{ij}+F_{jk}+F_{ki}$ 
lies in the lattice $\Lambda \,\subset\, \mathbb C$ formed by the periods of $\Omega$.

Consider the local holomorphic function $\{F_{ij}\}$ as a $1$-cocycle taking values in the translation 
group of $T$ and form the associated holomorphic principal elliptic bundle $M \, \longrightarrow\, B$ with 
typical fiber $T$ associated to this one-cocycle. The local forms on $U_i \times T$ which are 
$\pi_1^*(\omega_i )+\pi_2^*dz $, where $\pi_1$ (respectively, $\pi_2$) is the projection on the first 
(respectively, second) factor and $d z$ is a translation invariant holomorphic one-form on the elliptic 
curve $T$, glue compatibly to produce a globally defined holomorphic one-form $\omega$ on $M$. By 
construction, the differential $d\omega$ projects on $B$ to $\Omega$ and does not vanish. Hence, the 
holomorphic one-form $\omega$ is not closed; more precisely, its differential $d \omega$ is the pull-back 
on $M$ of the form $\Omega$. Moreover, $\omega \wedge d \omega$ does not vanish at any point in $M$ and 
provides a holomorphic section trivializing the canonical line bundle $K_M$.

Observe that the total space $M$ of the above holomorphic principal bundle
with fiber type $T$ is not K\"ahler because the holomorphic one-form $\omega$ on $M$ is not closed.

Moreover, the above manifold $M$ is simply connected, which is proved in \cite[Theorem 12.3 and Remark 
12.4]{Ho}. More precisely, employing the notation of \cite{Ho} observe that, by construction, the 
topological bundle $M$ is characterized by a characteristic class $c\,=\, a_1 \otimes \lambda_1 + a_2 
\otimes \lambda_2 \,\in\, H^2(M,\, \mathbb Z) \otimes \Lambda$, given by $\lambda_1\,=\,1/4, \lambda_2\,=\, 
\sqrt{-1}/4$ and the evaluation of $a_1$ equals $1$ on the pull-back of the class of the elliptic curve 
$E_1$ in $Y$ defined by the quotient of $\mathbb R (1,0)\oplus \mathbb R (1,0)$, equals $-1$ on the 
pull-back of the class of the elliptic curve $E_2$ in $Y$ defined by the quotient of $\mathbb R 
(0,\,1)\oplus \mathbb R (0,\,1)$ and $a_1$ vanishes on the complement in $H^2(M,\, \mathbb Z)$ of the 
direct summand generated by the pull-back of the classes of $E_1$ and $E_2$. The element $a_2$ is defined 
in a similar way using the pull-back of the classes of the elliptic curves generated by $E_3$ and $E_4$ 
which are the images in $Y$ of $\mathbb R (1,\,0)\oplus \mathbb R (0,\,1)$ and $\mathbb R (0,\,1)\oplus 
\mathbb R (1,\,0)$, respectively. Notice that in our situation the obstruction $\Delta$ in 
\cite[Proposition 6.6 and Remark 12.4]{Ho} vanishes, by construction, and since $a_1,\, a_2$ form the basis 
of a direct summand in $H^2(M,\, \mathbb Z)$ the manifold $M$ is simply connected by \cite[Theorem 
12.3]{Ho}.
\end{proof}

\begin{remark}
The manifold $M$ in Theorem \ref{simply connected with forms} is not K\"ahler because it bears a non-closed 
holomorphic one-form $\omega$. Its algebraic dimension therefore is $2$ (the projection of $M$ on $B$ 
coincides with the algebraic reduction of $M$). Notice that it comes from the proof that $\omega \wedge d 
\omega$ provides a holomorphic trivialization of the canonical line bundle of $M$. In particular, $\omega$ 
is not integrable (its kernel is not integrable and therefore it does not define a foliation on $M$). 
Recall that \cite[Proposition 1]{Br} proves that on compact complex manifolds $M$ of algebraic dimension 
$\dim M -1$, all integrable holomorphic one-forms are closed. Also recall that \cite[Theorem 1.2]{BD2} 
proves that on holomorphic principal torus bundles over compact K\"ahler Calabi-Yau manifolds all 
holomorphic geometric structures of affine type are actually locally homogeneous. This implies that those, 
among these manifolds, that admit a rigid holomorphic geometric structure of affine type must have infinite 
fundamental group \cite[Corollary 1.3]{BD2}. The manifolds in Theorem \ref{simply connected with forms} 
being simply connected do not admit any rigid holomorphic geometric structure of affine type. Moreover, the 
holomorphic one-form $\omega$ is locally homogeneous.
\end{remark}

\section*{Acknowledgments}

We thank the referee for helpful comments.
The authors would like to thank Valentino Tosatti for kindly answering their questions and pointing out 
references \cite{Boz, Ho, Fe}. We also thank Nicolina Istrati for her kind explanations about Vaisman 
Calabi-Yau manifolds and for noting a mistake in a preliminary version. The first-named author is partially 
supported by a J. C. Bose Fellowship (JBR/2023/000003). The second-named author is supported by ANR project 
IsoMoDyn ANR-25-CE40-1360-03.

\section*{Declarations}

Declaration of interests. The authors do not work for, advise, own shares in, or receive
funds from any organisation that could benefit from this article, and have declared no
affiliation other than their research organisations.

Data availability statement. No data were generated or used.



\begin{thebibliography}{ZZZZZ}

\bibitem[Am]{Am} A. M. Amores, Vector fields of a finite type $G$-structure, 
\textit{Jour. Diff. Geom.} \textbf{14} (1980), 1--6.

\bibitem[At1]{At1} M. F. Atiyah, On the Krull-Schmidt theorem with application to sheaves,
{\it Bull. Soc. Math. Fr.} {\bf 84} (1956), 307--317.

\bibitem[At2]{At2} M. F. Atiyah, Complex analytic connections in fibre
bundles, \textit{Trans. Amer. Math. Soc.} \textbf{85} (1957), 181--207.

\bibitem[Bea]{Be} A. Beauville, Vari\'et\'es k\"ahleriennes dont la premi\`ere
classe de Chern est nulle, \textit{Jour. Diff. Geom.} \textbf{18} (1983), 755--782.

\bibitem[Bel]{Bel} F. Belgun, On the metric structure of non-K\"ahler complex surfaces, {\it Math. Ann.}
{\bf 317} (2000), 1--40.

\bibitem[Ben]{Ben} Y. Benoist, Orbites des structures rigides (d'apr\`es M. Gromov), \textit{Progr. Math.} 
{\bf 145} (1997), 1--17.

\bibitem[BCDG]{BCDG} I. Biswas, J. Cao, S. Dumitrescu and H. Guenancia, Geometry of $K$-trivial Moishezon 
manifolds: decomposition theorem and holomorphic geometric structures, {\it Math. Annalen}, {\bf 391} 
(2025), 3181--3220.

\bibitem[BD1]{BD} I. Biswas and S. Dumitrescu, Holomorphic Affine Connections on Non-K\"ahler manifolds, 
\textit{Internat. J. Math.} {\bf 27} (2016), no. 11.

\bibitem[BD2]{BD2} I. Biswas and S. Dumitrescu, Fujiki class $\mathcal C$ and holomorphic geometric 
structures, {\it Internat. J. Math.} {\bf 31} (2020), no. 5.

\bibitem[BD3]{BD3} I. Biswas and S. Dumitrescu, Holomorphic Riemannian metric and fundamental group, {\it 
Bull. Soc. Math. Fr.} {\bf 147} (2019), 455--468.

\bibitem[BG]{BG} I. Biswas and T. L. G\'omez, Connections and Higgs fields on a principal bundle, {\it Ann. 
Global Anal. Geom.} {\bf 33} (2008), 19--46.

\bibitem[Bog]{Bo} F. A. Bogomolov, K\"ahler manifolds with trivial canonical class, \textit{Izv. Akad. 
Nauk. SSSR} {\bf 38} (1974), 11--21; English translation in \textit{ Math. USSR Izv.} {\bf 8} (1974), 
9--20.

\bibitem[Boz]{Boz} Y. Bozhkov, The geometry of certain three-folds, {\it Rend. Istitu. Math. Univ. Trieste}
{\bf 26}, (1994), 79--93.

\bibitem[Br]{Br} M. Brunella, On holomorphic forms on compact complex threefolds, \textit{Comment. Math. 
Helv.} \textbf{74} (1999), 642--656.

\bibitem[Ca]{Ca} F. Campana, Orbifoldes \`a premi\`ere classe de Chern nulle, {\it The Fano conference}, 
University Torino, Torino, 339--351.

\bibitem[CGGN]{CGGN} B. Claudon, P. Graf, H. Guenancia and P. Naumann, K\"ahler spaces with zero first 
Chern class: Bochner principle, Albanese map and fundamental groups, {\it J. Reine Angew. Math.}, {\bf 
786}, (2022), 245--275.

\bibitem[DG]{DG} G. D'Ambra and M. Gromov, \textit{Lectures on transformations groups: geometry and 
dynamics}, Surveys in Differential Geometry, Cambridge MA, (1991).

\bibitem[Du]{Du} S. Dumitrescu, Structures g\'eom\'etriques holomorphes sur les vari\'et\'es complexes 
compactes, \textit{Ann. Scient. Ec. Norm. Sup} \textbf{34}, (2001), 557--571.

\bibitem[Du1]{Du1} S. Dumitrescu, Connexions affines et projectives sur les surfaces complexes compactes, 
{\it Math. Zeit.} {\bf 264} (2010), 301--316.

\bibitem[Fe]{Fe} T. Fei, Remarks on generalized Calabi-Gray manifolds, arxiv:2307.13265

\bibitem[FP]{FP} J. Fine and D. Panov, Hyperbolic geometry and non-K\"ahler manifolds, {\it Geometry and 
Topology}, {\bf 14} (2010), 1723--1763.

\bibitem[Ga]{Ga} P. Gauduchon, Le th\'eor\`eme de l'excentricit\'e nulle, \textit{Com. Ren. Acad. Sci. 
Paris} {\bf 285} (1977), 387--390.


\bibitem[GMPN]{GMPN} G. Grantcharov, C. McLaughin, H. Pedersen and Y. Poon, Deformations of Kodaira 
manifolds, {\it Glasgow Math. J.} {\bf 46} (2004), 259--281.

\bibitem[Gr]{Gr} M. Gromov, Rigid Transfomations Groups, Editors D. Bernard and Y. Choquet Bruhat, 
\textit{G\'eom\'etrie Diff\'erentielle}, \textbf{33}, Hermann, (1988), 65--139.

\bibitem[Ho]{Ho} T. H\"ofer, Remarks on torus principal bundles, {\it J. Math. Kyoto Univ.}, {\bf 33} 
(1993), 227--259.

\bibitem[HL]{HL} D. Huybrechts and M. Lehn, {\it The geometry of moduli spaces of sheaves}, Aspects of 
Mathematics, E31, Friedr. Vieweg~\&~Sohn, Braunschweig, 1997.

\bibitem[IKO]{IKO} M. Inoue, S. Kobayashi and T. Ochiai, Holomorphic affine connections on compact complex 
surfaces, {\it J. Fac. Sci. Univ. Tokyo Sect. IA Math.}, {\bf 27} (1980), 247--264.

\bibitem[Is]{Is} N. Istrati, Vaisman manifolds with vanishing first Chern class, {\it Jour. Eur. Math. 
Soc.} (2025), DOI 10.4171/JEMS/1633

\bibitem[Fr]{Fr} R. Friedman, On threefolds with trivial canonical bundle, {\it Complex geometry and Lie 
theory}, Proc. Sympos Pure Math., {\bf 53}, American Mathematical Society, Providence, (1991), 103--134.

\bibitem[GGK]{GGK} D. Greb, H. Guenancia and S. Kebekus, Klt varieties with trivial canonical class: 
holonomy, differential forms and fundamental groups, {\it Geom. Topol.} {\bf 23} (2019), 2051--2124.

\bibitem[Ko]{Ko} S. Kobayashi, \textit{Differential geometry of complex vector bundles}, Princeton 
University Press, Princeton, NJ, Iwanami Shoten, Tokyo, 1987.

\bibitem[LY]{LY} J. Li and S.-T. Yau, Hermitian-Yang-Mills connections on non-Kahler manifolds, 
\textit{Mathematical aspects of string Theory}, Adv. Ser. Math. Phys., ed.: S.-T. Yau,
 World Scientific, (1987), 560--573.

\bibitem[LT]{LT} M. L\"ubke and A. Teleman, \textit{The Kobayashi-Hitchin correspondence}, World 
Scientific, 1995.

\bibitem[LT1]{LT1} P. Lu and G. Tian, The complex structure on a connected sum of $S^3 \times S^3$ with 
trivial canonical bundle, {\it Math. Ann.} {\bf 298} (1994), 761--764.

\bibitem[LT2]{LT2} P. Lu and G. Tian, Complex structure on connected sums of $S^3 \times S^3$, {\it 
Manifolds and geometry}, 284--293, Symposia Math., XXXVI,, Cambridge, Cambridge Univ. Press, 1996.

\bibitem[No]{No} K. Nomizu, On local and global existence of Killing vector fields, \textit{Ann. of Math.} 
\textbf{72} (1960), 105--120.

\bibitem[OV]{OV} L. Ornea and M. Verbitsky, {\it Principles of locally conformally K\"ahler geometry}, Birkh\"auser, (2024).

\bibitem[PV]{PV} V. Popov and E. Vinberg, Invariant Theory, {\it Algebraic Geometry IV}, {\bf 55}, Editors 
I. Shafarevich, A. Parshin, E.M.S. Springer-Verlag, (1991), 123--280.

\bibitem[Ro]{Ro} S. Rollenske, The Kuranishi space of complex parallelisable nilmanifolds, {\it J. Eur. 
Math. Soc.} {\bf 13} (2011), 513--531.

\bibitem[To]{To} V. Tosatti, Non-K\"ahler Calabi--Yau manifolds, \textit{Contemp. Math.} \textbf{644} 
(2015), 261--277.

\bibitem[Ts]{Ts} K. Tsukada, Holomorphic maps of compact generalized Hopf manifolds, {\it Diff. Geom. 
Appl.} {\bf 69} (1997), 61--71.

\bibitem[Va1]{Va1} I. Vaisman, On locally and globally locally K\"ahler manifolds, {\it Trans. Amer. Math. 
Soc.} {\bf 262} (1980), 533--542.

\bibitem[Va2]{Va2} I. Vaisman, Generalized Hopf manifolds, {\it Geom. Dedicata} {\bf 13} (1982), 231--255.

\bibitem[Vi]{Vi} A. Vitter, Affine structures on compact complex manifolds, {\it Invent. Math.}, {\bf 17}, (1972), 231--244.

\bibitem[Wa]{Wa} H.-C. Wang, Complex Parallelisable manifolds, \textit{Proc. Amer. Math. Soc.} \textbf{5} 
(1954), 771--776.

\bibitem[Ya]{Ya} S.-T. Yau, On the Ricci curvature of a compact K\"ahler manifold and the complex 
Monge-Amp\`ere equation. I, {\it Comm. Pure Appl. Math.} {\bf 31} (1978), 339--411.

\end{thebibliography}
\end{document}